\documentclass[12pt]{amsart}
\usepackage[margin=1.4in]{geometry}
\usepackage{graphicx,bm}
\usepackage{slashed}
\usepackage[breaklinks,colorlinks]{hyperref}
\usepackage{verbatim}
\usepackage{cases}
\usepackage{empheq}
\usepackage{color}
\numberwithin{equation}{section}

\vfuzz2pt 
\hfuzz2pt 
\newtheorem{thm}{Theorem}[section]

\newtheorem{cor}[thm]{Corollary}
\newtheorem{lem}[thm]{Lemma}

\newtheorem{prop}[thm]{Proposition}

\newtheorem{defn}[thm]{Definition}
\newtheorem{rmk}[thm]{Remark}



\newcommand{\pt}{\partial}
\DeclareMathOperator{\dive}{div}

\allowdisplaybreaks

\begin{document}

\title[Uniqueness of Dirac-harmonic maps]{Uniqueness of Dirac-harmonic maps from a compact surface with boundary}
\author[J\"urgen Jost]{J\"urgen Jost}
\address{Max Planck Institute for Mathematics in the Sciences, Inselstrasse 22, 04103 Leipzig, Germany}
\email{jost@mis.mpg.de}

\author[Jingyong Zhu]{Jingyong Zhu}
\address{Department of Mathematics, Sichuan University, Chengdu 610065, China}
\email{jzhu@scu.edu.cn}

\subjclass[2010]{35J57; 53C43; 58E20}
\keywords{Dirac-harmonic maps; uniqueness; energy convexity; coupled Dirac-harmonic maps.}


\begin{abstract}
As a commutative version of the supersymmetric nonlinear sigma model, Dirac-harmonic maps from Riemann surfaces were introduced fifteen years ago. They are critical points of an unbounded conformally invariant functional involving two fields, a map from a Riemann surface into a Riemannian manifold and a section of a Dirac bundle which is the usual spinor bundle twisted with the pull-back of the tangent bundle of the target by the map. As solutions to a coupled nonlinear elliptic system, the existence and regularity theory of Dirac-harmonic maps has already received much attention, while the general uniqueness theory has not been established yet. For uncoupled Dirac-harmonic maps, the map components are harmonic maps. Since the uniqueness theory of harmonic maps from a compact surface with boundary is known, it is sufficient to consider the uniqueness of the spinor components, which are solutions to the corresponding boundary value problems for a nonlinear Dirac equation. In particular, when the map components belong to $W^{1,p}$ with $p>2$, the spinor components are uniquely determined by boundary values and map components. For coupled Dirac-harmonic maps, the map components are not harmonic maps. So the uniqueness problem is more difficult to solve. In this paper, we study the uniqueness problem on a compact surface with boundary. More precisely, we prove the energy convexity for weakly Dirac-harmonic maps from the unit disk with small energy. This yields the first uniqueness result about Dirac-harmonic maps from
a surface conformal to the unit disk with small energy and arbitrary boundary values.
\end{abstract}
\maketitle

\vspace{2em}
\section{Introduction}

Since the maximum principle does not apply to solutions of nonlinear elliptic systems, there are much fewer  tools available than for single elliptic equations for answering questions regarding the symmetry, the uniqueness and the regularity of these solutions. Rivi\`ere \cite{riviere2007conservation} succeeded in writing 2-dimensional conformally invariant nonlinear elliptic partial differential equations (harmonic map equation, prescribed mean curvature equations,..., etc.) in divergence form. These divergence-free quantities generalize to target manifolds without symmetries the well known conservation laws for weakly harmonic maps into homogeneous spaces. Using the conservation law, Rivi\`ere proved the (interior) continuity of any $W^{1,2}$ weak solution $u:B_1\to N\hookrightarrow\mathbb{R}^K$ to
\begin{equation}\label{antisymmetric form}
    -\Delta u=\Omega\cdot\nabla u
\end{equation}
with $\Omega\in L^2(B_1,so(K)\otimes\wedge^1\mathbb{R}^2)$. More precisely, this special form of the nonlinearity enabled Rivi\`ere to obtain a conservation law for this system, i.e.
\begin{equation}\label{jacobian}
    \dive(A\nabla u)=\nabla^\perp B\cdot\nabla u
\end{equation}
where $A\in W^{1,2}(B_1,{\rm GL}_K(\mathbb{R}))$ and $B\in W^{1,2}_0(B_1,M_K(\mathbb{R}))$. Here $\nabla$ and $\nabla^\perp$ are the gradient and the orthogonal gradient, respectively. This was accomplished via a technique that we call Rivi\`ere's gauge decomposition, see Section 3. The right-hand side of this new system \eqref{jacobian} lies in the Hardy space $\mathcal{H}^1$ by a result of Coifman, Lions, Meyer and Semmes \cite{coifman1993Compensated}. Moreover, using a Hodge decomposition argument, one can show that $u$ lies locally in $W^{2,1}$ which embeds into $C^0$ in two dimension. The key to this fact is a special ``compensation phenomenon" for Jacobian determinants (e.g. the right-hand side of \eqref{jacobian}), which was first observed
by Wente \cite{Wente1969existence} (see also Lemma \ref{wente lemma}). These Wente type estimates have many interesting applications, see e.g. \cite{coifman1993Compensated,helein2002Harmonic,riviere2007conservation,Wente1969existence}.

 Rivi\`ere's gauge decomposition also applies to the uniqueness problem, which is a very important and difficult issue for conformally invariant systems of partial differential equations. See \cite{lamm2013Estimates} and \cite{lin2014uniformity} for the harmonic map and its flow (see also \cite{wang2012harmonic} for a different approach), \cite{laurain2020Energy} for the bi-harmonic map and its flow. In both cases, the Rivi\`ere's gauge decomposition is very useful to derive the energy convexity, and some form of convexity is usually the reason for the uniqueness.

Motivated by the supersymmetric nonlinear sigma model from quantum field theory (see \cite{jost2009geometry}), Dirac-harmonic maps from Riemann surfaces (with a fixed spin structure) into Riemannian manifolds were introduced in \cite{chen2006dirac}. They  generalize harmonic maps and harmonic spinors. From the variational point of view, they are critical points of a conformally invariant  action functional, whose Euler-Lagrange equations constitute  an elliptic system coupling a harmonic map type equation with a nonlinear Dirac equation.

The existence and regularity theory of Dirac-harmonic maps already received much attention (see  \cite{ammann2013dirac,chen2015dirac,jost2018geometric, jost2019alpha,jost2021existence,jost2019short,wang2009regularity} and the references therein), while the uniqueness problem is still quite open. As we mentioned above, for harmonic maps, the uniqueness has a deep relationship with the energy convexity. When the target manifold has  nonpositive sectional curvature, the Dirichlet energy is always convex (see \cite{jost2017riemannian}). For a general target manifold, if the energy is sufficiently small, one can still get the energy convexity and the uniqueness as follows.

\begin{thm}\cite{colding2007Width}\cite{lamm2013Estimates}\label{harmonic}
There exists a constant $\varepsilon_0$ depending only on $N$ such that if $u,v\in W^{1,2}(B_1,N)$ with
\begin{equation*}
E(u)=\int_{B_1}|\nabla u|^2\leq\varepsilon_0, \  \  u|_{\partial B_1}=v|_{\partial B_1}
\end{equation*}
and $u$ is a weakly harmonic map, then we have the energy convexity
\begin{equation*}
\frac12\int_{B_1}|\nabla v-\nabla u|^2\leq\int_{B_1}|\nabla v|^2-\int_{B_1}|\nabla u|^2.
\end{equation*}
Moreover, if $v$ is also a weakly harmonic map with
\begin{equation*}
E(v)=\int_{B_1}|\nabla v|^2\leq\varepsilon_0,
\end{equation*}
then we have $u=v$ in $B_1$. Here $B_1\subset\mathbb{R}^2$ is a disk.
\end{thm}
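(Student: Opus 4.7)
The plan is to expand the energy difference, reduce the claim to a single cross-term estimate, and then control the cross term using the harmonic-map structure together with the target-manifold geometry. Write $w := v - u \in W^{1,2}_0(B_1,\mathbb{R}^K)$; by the shared boundary data, $w$ vanishes on $\partial B_1$ in the trace sense. Expanding $|\nabla v|^2 = |\nabla u + \nabla w|^2$ and collecting terms gives
\begin{equation*}
\int_{B_1}|\nabla v|^2 - \int_{B_1}|\nabla u|^2 - \frac12\int_{B_1}|\nabla w|^2 \;=\; \frac12\int_{B_1}|\nabla w|^2 + 2\int_{B_1}\nabla u\cdot\nabla w,
\end{equation*}
so the desired energy convexity is equivalent to the single inequality
\begin{equation*}
\Big|\int_{B_1}\nabla u\cdot\nabla w\Big| \leq \frac14\int_{B_1}|\nabla w|^2.
\end{equation*}

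The next step is to rewrite the cross term using the harmonic map equation. Since $u$ is weakly harmonic into $N\hookrightarrow\mathbb{R}^K$, it satisfies $-\Delta u = A(u)(\nabla u,\nabla u)$, where $A(u)(\cdot,\cdot)$ is the second fundamental form of $N$ at $u$, and in particular $A(u)(\nabla u,\nabla u)\perp T_uN$. By the continuity of $u$ (a consequence of Rivi\`ere's theorem \cite{riviere2007conservation} applied to the antisymmetric form $-\Delta u = \Omega\cdot\nabla u$), one may integrate by parts against $w\in W^{1,2}_0$ to get
\begin{equation*}
\int_{B_1}\nabla u\cdot\nabla w \;=\; -\int_{B_1}\Delta u\cdot w \;=\; \int_{B_1}A(u)(\nabla u,\nabla u)\cdot w.
\end{equation*}
Since the integrand is orthogonal to $T_uN$, only the component $w^N$ of $w$ normal to $T_uN$ contributes. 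The fact that both $u(x),v(x)\in N$, combined with the smoothness and compactness of the embedding, yields a uniform pointwise bound $|w^N(x)| \leq C|w(x)|^2$ with $C = C(N)$, and hence the pointwise improvement
$\big|\int_{B_1}\nabla u\cdot\nabla w\big| \leq C\int_{B_1}|\nabla u|^2 |w|^2$.

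To close the argument we bound the right-hand side by a small constant times $\int|\nabla w|^2$. The Gagliardo--Nirenberg inequality in two dimensions, together with the Poincar\'e inequality for $w\in W^{1,2}_0(B_1)$, gives $\|w\|_{L^4(B_1)}^2 \leq C\|w\|_{L^2}\|\nabla w\|_{L^2} \leq C\|\nabla w\|_{L^2}^2$. The plan for $\nabla u$ is to use the higher integrability supplied by Rivi\`ere's gauge decomposition: for $\varepsilon_0$ sufficiently small, the conservation law $\dive(A\nabla u) = \nabla^\perp B\cdot\nabla u$, whose right-hand side lies in the Hardy space $\mathcal{H}^1$, combined with Wente's lemma (Lemma \ref{wente lemma}) yields $\nabla u\in L^4(B_1)$ with $\|\nabla u\|_{L^4}^2 \leq C\varepsilon_0$. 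H\"older's inequality then produces
\begin{equation*}
\int_{B_1}|\nabla u|^2|w|^2 \;\leq\; \|\nabla u\|_{L^4}^2\|w\|_{L^4}^2 \;\leq\; C\varepsilon_0\int_{B_1}|\nabla w|^2,
\end{equation*}
and choosing $\varepsilon_0$ small depending only on $N$ gives the cross-term estimate, and hence the energy convexity. The main obstacle is obtaining $L^4$ control of $\nabla u$ up to $\partial B_1$: one needs a boundary version of Rivi\`ere's decomposition, exploiting the $H^{1/2}$ regularity of the common boundary trace, to upgrade the interior $\varepsilon$-regularity to a global estimate on $B_1$.

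Finally, the uniqueness assertion follows by symmetry: if $v$ is also weakly harmonic with $E(v)\leq\varepsilon_0$, applying the convexity inequality with the roles of $u$ and $v$ interchanged gives $\frac12\int|\nabla w|^2 \leq E(u) - E(v)$; adding this to the original inequality yields $\int|\nabla w|^2\leq 0$, so $w\equiv 0$ and hence $u\equiv v$ in $B_1$.
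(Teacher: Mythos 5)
The setup is right—expanding the energy, reducing to a cross-term estimate, integrating against the harmonic map equation, and exploiting the normal component bound $|w^N|\leq C|w|^2$ to arrive at $\big|\int\nabla u\cdot\nabla w\big|\leq C\int|\nabla u|^2|w|^2$—all of that matches the Colding--Minicozzi/Lamm--Lin strategy that the paper follows in Section~6. The gap is in how you then bound $\int|\nabla u|^2|w|^2$.

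Your plan is to produce $\nabla u\in L^4(B_1)$ globally, with $\|\nabla u\|_{L^4}^2\leq C\varepsilon_0$, from Rivi\`ere's conservation law and Wente's lemma, and then use H\"older together with Gagliardo--Nirenberg on $w$. This step does not go through. Wente's estimate (and the resulting $W^{2,1}_{\rm loc}$ regularity) gives $\nabla u\in L^{2,1}_{\rm loc}$, the Lorentz refinement of $L^2$; it does \emph{not} give $\nabla u\in L^4_{\rm loc}$, and in fact $W^{2,1}$ in two dimensions does not embed into $W^{1,p}$ for any $p>2$. More fundamentally, the theorem only assumes the shared boundary trace lies in $W^{1/2,2}(\partial B_1)$, and for such data no amount of interior gauge-theoretic bootstrap can yield $\nabla u\in L^4(B_1)$ up to the boundary: already for a scalar harmonic function with $W^{1/2,2}$ boundary data one has $\nabla u\in L^2(B_1)$ but generally not $L^4(B_1)$. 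So the ``boundary version of Rivi\`ere's decomposition'' you gesture at would not close the argument; the obstruction is not a missing estimate on $u$, it is that no H\"older split of the form $\|\,|w|^2\|_{L^p}\,\|\,|\nabla u|^2\|_{L^{p'}}$ with $p'>1$ can work under the stated hypotheses.

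What the paper (following Colding--Minicozzi and Lamm--Lin) actually does is avoid any higher integrability of $\nabla u$. One shows that $|\nabla u|^2$ lies in the \emph{local Hardy space} $h^1(B_1)$ with $\|\,|\nabla u|^2\|_{h^1(B_1)}\leq C\int_{B_1}|\nabla u|^2$—this is where the Rivi\`ere gauge enters, via the pointwise comparison $\tfrac14|\nabla u|^2\leq \nabla^\perp\eta\cdot P^T(x)\nabla u$ and the div-curl (Jacobian) structure of the right-hand side. One then solves $\Delta\varphi=|\nabla u|^2$ with $\varphi|_{\partial B_1}=0$ and gets $\|\varphi\|_{L^\infty}+\|\nabla\varphi\|_{L^2}\leq C\int|\nabla u|^2$. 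Finally, the interaction term is controlled by a two-step integration by parts (the paper's Lemma~\ref{reduce to varphi}):
\begin{equation*}
\int_{B_1}|w|^2|\nabla u|^2=\int_{B_1}|w|^2\Delta\varphi\leq \cdots\leq C\|\varphi\|_{L^\infty}\int_{B_1}|\nabla w|^2\leq C\varepsilon_0\int_{B_1}|\nabla w|^2,
\end{equation*}
which uses only $\|\varphi\|_{L^\infty}$ and $w\in W^{1,2}_0$, never any $L^q$ bound on $\nabla u$ with $q>2$. Replacing your H\"older/Gagliardo--Nirenberg step with this Hardy-space potential argument is what makes the proof work for arbitrary $W^{1,2}$ boundary data. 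The final symmetry argument for uniqueness is fine as written.
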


 In the context of Dirac-harmonic maps, we do not have the nonnegativity of the second variation formula (see \cite{jost2018geometric}) even if the target is a nonpositively curved manifold. Moreover, the action functional for Dirac-harmonic  maps is unbounded from below because of the Dirac term in the functional. Also, because of the Dirac operator, the right-hand side of the equation for the map component is no longer zero. It is a quadratic term of the spinor involving the curvature of the target manifold. For coupled Dirac-harmonic maps (i.e. when the map component is not a harmonic map), this curvature term does not vanish and we do not know what the map is. In particular, when the energy of the coupled Dirac-harmonic is small, this curvature term is a dominant term and generally controlled by good terms with a bad power. These issues make the uniqueness of coupled Dirac-harmonic maps more difficult to deal with.

 Different from  harmonic maps, we have to deal with the uniqueness of two components, both the maps and the spinor fields. For the spinor fields, the only uniqueness result so far was proved in \cite{chen2017estimates} for $W^{1,2\alpha}$ maps with $\alpha>1$. Together with the uniqueness of harmonic maps into target manifolds with nonpositive sectional curvatures and Theorem \ref{harmonic}, we observe the following uniqueness theorem for uncoupled Dirac-harmonic maps (i.e. when the map component is a harmonic map).

\begin{thm}\label{uncoupled dh thm}
Given any uncoupled Dirac-harmonic maps $(\phi_1,\psi_1)$, $(\phi_2,\psi_2)$ with $\phi_i\in W^{1,2\alpha}(B_1,N)$ and $\psi_i\in W^{1,4/3}(B_1,\Sigma B_1\otimes\phi_i^*TN)$ for some $\alpha>1$ and $i=1,2$, if the target manifold $N$ has anonpositive sectional curvature, or
\begin{equation}
    E(\phi_i,\psi_i):=\int_{B_1}|\nabla\phi_i|^2\leq\varepsilon_0, \forall  i=1,2
\end{equation}
for a constant $\varepsilon_0$ only depending on $N$ and
\begin{equation}\label{bdy values}
\begin{cases}
  \phi_1=\phi_2,\ & \text{on} \ \partial B_1,\\
  {\bf B}\psi_1={\bf B}\psi_2, \ & \text{on} \ \partial B_1,
    \end{cases}
\end{equation}
then $(\phi_1,\psi_1)=(\phi_2,\psi_2)$ in $B_1$. Here, ${\bf B}$ is the chiral boundary operator for the twisted Dirac operator, see the definition \eqref{bdy op}.
\end{thm}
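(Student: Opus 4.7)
The plan is to decouple the two uniqueness assertions: first pin down the map components, then reduce the spinor statement to a homogeneous linear problem for the twisted Dirac operator along that common map.

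For the maps, the uncoupled hypothesis means that each $\phi_i$ is itself a weakly harmonic map from $B_1$ into $N$, and the two share Dirichlet data on $\partial B_1$ by \eqref{bdy values}. If $N$ has nonpositive sectional curvature I would invoke the classical Hartman-type uniqueness theorem for harmonic maps into such targets, referenced in the introduction. In the small-energy case I would instead apply Theorem \ref{harmonic} twice, once treating $\phi_1$ as the harmonic map and $\phi_2$ as the competitor, then swapping the roles; adding the two resulting energy convexity inequalities forces $\int_{B_1}|\nabla\phi_1-\nabla\phi_2|^2=0$, and the common trace then yields $\phi_1=\phi_2$ in $B_1$. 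Write $\phi$ for this common map.

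Once the map is fixed, the twisted bundles $\Sigma B_1\otimes\phi_i^*TN$ coincide and both spinors satisfy the same twisted Dirac equation $\slashed{D}^{\phi}\psi_i=0$. This equation is linear in $\psi$ for fixed $\phi$, so $\psi:=\psi_1-\psi_2$ solves
\begin{equation*}
\slashed{D}^{\phi}\psi=0\ \text{in}\ B_1,\qquad {\bf B}\psi=0\ \text{on}\ \partial B_1,
\end{equation*}
where the boundary condition uses \eqref{bdy values} and the linearity of the chiral boundary operator ${\bf B}$. At this point I would invoke the boundary-value uniqueness result of \cite{chen2017estimates}, whose hypothesis $\phi\in W^{1,2\alpha}$ with $\alpha>1$ is exactly the integrability assumption available here; it gives $\psi\equiv 0$ and hence $\psi_1=\psi_2$.

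The main obstacle lives in the spinor step. In a local frame the twisted Dirac equation reads schematically $\slashed{\partial}\psi^i=-\Gamma^i_{jk}(\phi)\nabla\phi^j\cdot\psi^k$, so the zeroth-order coefficient only lies in $L^{2\alpha}$. The strict inequality $\alpha>1$ is precisely what lets one absorb this coefficient via an $L^p$ Calder\'on--Zygmund estimate for the Dirac operator; at the borderline $\alpha=1$ the estimate barely fails, which is the analog of the subtlety that Rivi\`ere's gauge decomposition addresses for second-order conformally invariant systems. The first two steps are essentially bookkeeping on top of Theorem \ref{harmonic} and the uncoupled assumption, so it is this quantitative compensation, already packaged in \cite{chen2017estimates}, that carries the real weight of the argument.
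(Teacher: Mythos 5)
Your proposal matches the paper's own reasoning: the paper proves this theorem simply by observing that the map components are harmonic maps whose uniqueness follows from the nonpositive-curvature theory or from Theorem \ref{harmonic}, and then the spinor components coincide by the boundary-value uniqueness result of \cite{chen2017estimates} for the (linear, once $\phi$ is fixed) twisted Dirac operator under the $W^{1,2\alpha}$, $\alpha>1$ hypothesis on the map. Your additional remarks on why $\alpha>1$ is needed are a correct gloss on \cite{chen2017estimates}, not a departure from the paper's route.
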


A similar super-critical restriction on the regularity of the other component is also important when we investigate the uniqueness of the map component. By assuming the spinor fields lie in the space $W^{1,r}$ with $r>4/3$ and the energy $E(\phi_1,\psi_1)$ of the weakly Dirac-harmonic map $(\phi_1,\psi_1)$ is sufficiently small, we first prove the following energy convexity.

\begin{thm}\label{energy convexity}
Suppose that $(\phi_1,\psi_1)$ is a weakly coupled Dirac-harmonic map with $\phi_1\in W^{1,2}(B_1,N)$ and $\psi_1\in W^{1,r}(B_1,\Sigma B_1\otimes\phi_1^*TN)$ for some $r>4/3$. For any $\phi_2\in W^{1,2}(B_1,N)$ satisfying
\begin{equation}\label{1 dh}
    \int_{B_1}|\phi_2-\phi_1|^2|\nabla\phi_1|^2\geq c_0^2
\end{equation} for some positive constant $c_0$. There exists a constant $\varepsilon_0$ depending on the target manifold $N$ and $\|\psi_1\|_{W^{1,r}}$ such that
if
\begin{equation}\label{small energy}
    E(\phi_1):=\int_{B_1}|\nabla\phi_1|^2\leq\varepsilon_0, \ \ E(\psi_1):=\int_{B_1}|\psi_1|^4\leq\varepsilon_0^p, \ \forall i=1,2
\end{equation}
for some constant $p=p(c_0)\geq1$ determined by \eqref{p}
and
$\phi_1=\phi_2$ on $\partial B_1$,
then
\begin{equation}\label{energy convex}
    \frac12\int_{B_1}|\nabla \phi_2-\nabla{\phi_1}|^2\leq\int_{B_1}|\nabla{\phi_2}|^2-\int_{B_1}|\nabla{\phi_1}|^2.
\end{equation}
\end{thm}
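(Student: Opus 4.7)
My plan is to adapt the Colding--Minicozzi--Lamm--Lin energy-convexity scheme for harmonic maps (Theorem~\ref{harmonic}) to cope with the extra curvature/spinor term that appears in the map equation of a coupled Dirac-harmonic map. Embedding $N\hookrightarrow\mathbb{R}^K$ isometrically, the map component of a weakly Dirac-harmonic map satisfies $-\Delta\phi_1 = A(\phi_1)(\nabla\phi_1,\nabla\phi_1)+\mathcal{R}(\phi_1,\psi_1)$, where $A$ is the second fundamental form of the embedding and $\mathcal{R}$ is the spinor-curvature term with $|\mathcal{R}(\phi_1,\psi_1)|\leq C|\psi_1|^2|\nabla\phi_1|$. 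As in the harmonic case I would recast the quadratic term in Rivi\`ere's antisymmetric form
\[
-\Delta\phi_1 = \Omega\cdot\nabla\phi_1 + f,\qquad \Omega\in L^2(B_1,so(K)\otimes\wedge^1\mathbb{R}^2),\qquad f=\mathcal{R}(\phi_1,\psi_1),
\]
with $\|\Omega\|_{L^2}\leq C\|\nabla\phi_1\|_{L^2}\leq C\sqrt{\varepsilon_0}$, and then invoke Rivi\`ere's gauge decomposition to obtain $A\in W^{1,2}\cap L^\infty(B_1,{\rm GL}_K)$ with $\|A-I\|_{L^\infty}+\|A\|_{W^{1,2}}\leq C\sqrt{\varepsilon_0}$ and $B\in W^{1,2}_0(B_1,M_K(\mathbb{R}))$ with $\|B\|_{W^{1,2}}\leq C\sqrt{\varepsilon_0}$, satisfying
\[
\dive(A\nabla\phi_1) = \nabla^\perp B\cdot\nabla\phi_1 + Af.
\]

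Setting $w:=\phi_2-\phi_1\in W^{1,2}_0(B_1,\mathbb{R}^K)$, the conclusion \eqref{energy convex} is equivalent, after expanding $|\nabla\phi_2|^2=|\nabla\phi_1+\nabla w|^2$, to $-\int_{B_1}\nabla\phi_1\cdot\nabla w\leq \tfrac14\int_{B_1}|\nabla w|^2$. To exploit the conservation-law form I would test against $w$ and rearrange to get
\[
\int_{B_1}\nabla\phi_1\cdot\nabla w = \int_{B_1}(I-A)\nabla\phi_1\cdot\nabla w - \int_{B_1} w\cdot\bigl(\nabla^\perp B\cdot\nabla\phi_1\bigr) - \int_{B_1} w\cdot Af.
\]
The first term is controlled by H\"older together with $\|I-A\|_{L^\infty}\leq C\sqrt{\varepsilon_0}$, while the Jacobian-structured second term is controlled by the Wente-type Lemma~\ref{wente lemma} combined with $\|B\|_{W^{1,2}}\leq C\sqrt{\varepsilon_0}$; both produce bounds of order $\sqrt{\varepsilon_0}\|\nabla\phi_1\|_{L^2}\|\nabla w\|_{L^2}$, which, once $\varepsilon_0$ is small, are absorbable into $\tfrac14\|\nabla w\|_{L^2}^2$.

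The genuinely new obstacle, and what I expect to be the hardest part, is the spinor-curvature term $\int_{B_1} w\cdot Af$. Cauchy--Schwarz alone only gives
\[
\Bigl|\int_{B_1}w\cdot Af\Bigr|\leq C\|\psi_1\|_{L^4}^2\Bigl(\int_{B_1}|w|^2|\nabla\phi_1|^2\Bigr)^{1/2},
\]
and the weighted quantity on the right does not reduce to $\|\nabla w\|_{L^2}^2$ via any direct Sobolev inequality. The two distinctive hypotheses of the theorem are designed to compensate: the super-critical regularity $\psi_1\in W^{1,r}$ with $r>4/3$ embeds into $L^s$ for some $s>4$, providing enough H\"older room to combine with the $\varepsilon$-regularity improvement $\nabla\phi_1\in L^q$ for some $q>2$ (with small norm) that comes from the small-energy hypothesis on $\phi_1$; meanwhile the non-degeneracy condition \eqref{1 dh}, used in the form $c_0^{-2(p-1)}\bigl(\int|w|^2|\nabla\phi_1|^2\bigr)^p\geq\int|w|^2|\nabla\phi_1|^2$, lets us trade the unfavourable half-power for a larger power of $\int|w|^2|\nabla\phi_1|^2$ that can then be Sobolev-controlled by $\|\nabla w\|_{L^2}^2$ at the expense of a factor of $c_0^{-(p-1)}$. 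Matching the resulting power of $\|\psi_1\|_{L^4}^2\leq\varepsilon_0^{p/2}$ against this factor determines the exponent $p=p(c_0)$ in \eqref{small energy} and yields an estimate of the schematic form
\[
\Bigl|\int_{B_1}w\cdot Af\Bigr|\leq C\bigl(c_0,\|\psi_1\|_{W^{1,r}}\bigr)\,\varepsilon_0^{\alpha}\,\|\nabla w\|_{L^2}^2
\]
with some $\alpha>0$. Choosing $\varepsilon_0$ small absorbs this contribution into $\tfrac14\|\nabla w\|_{L^2}^2$, and together with the estimates on the first two terms in the decomposition above, yields the energy convexity~\eqref{energy convex}.
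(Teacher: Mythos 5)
Your plan identifies the curvature/spinor term and the non-degeneracy trade-off correctly, but it departs from what the paper actually does in a way that I do not think can be repaired within your scheme. Testing the conservation law against $w=\phi_2-\phi_1$ produces bounds that are linear in $\|\nabla w\|_{L^2}$: for instance
\[
\Bigl|\int_{B_1}(I-A)\nabla\phi_1\cdot\nabla w\Bigr|\leq \|I-A\|_{L^\infty}\|\nabla\phi_1\|_{L^2}\|\nabla w\|_{L^2}\leq C\varepsilon_0\|\nabla w\|_{L^2},
\]
and the same is true for your Wente-controlled Jacobian term and for $\int_{B_1} w\cdot Af$; a bound of size $C\varepsilon_0\|\nabla w\|_{L^2}$ cannot be absorbed into $\tfrac14\|\nabla w\|_{L^2}^2$ uniformly in $w$. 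The idea you are missing is the Colding--Minicozzi pointwise trick, \cite[Lemma~A.1]{cm2008Width}: the second fundamental form $II(d\phi_1,d\phi_1)$ is normal to $N$, while the normal component of $\phi_2-\phi_1$ (both maps taking values in $N$) is of size $O(|\phi_2-\phi_1|^2)$. Applying this pointwise after integrating by parts against $\phi_2-\phi_1$ and substituting the equation for $\Delta\phi_1$, and then handling the $\mathcal{R}(\phi_1,\psi_1)$ contribution by H\"older together with the constraint~\eqref{1 dh}, reduces everything to the single quadratic-in-$w$ quantity $\int_{B_1}|w|^2|\nabla\phi_1|^2$. Your testing-by-$w$ step throws this quadratic gain away.

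Your suggestion that $\int_{B_1}|w|^2|\nabla\phi_1|^2$ can then be ``Sobolev-controlled'' also glosses over the real difficulty: $\phi_1\in W^{1,2}$ gives only $|\nabla\phi_1|^2\in L^1(B_1)$, and $w\in W^{1,2}_0(B_1)$ does not embed into $L^\infty$, so no direct H\"older/Sobolev splitting applies. What the paper proves is that $|\nabla\phi_1|^2$ lies in the local Hardy space $h^1(B_1)$, globally up to the boundary, using the gauge quantities $A$, $B$, $P$ and the estimates of Sections 4 and 5; this permits solving $\Delta\varphi=|\nabla\phi_1|^2$ with $\varphi\in L^\infty\cap W^{1,2}_0(B_1)$ and $\|\varphi\|_{L^\infty}\leq C\varepsilon_0$, after which a two-fold Stokes argument (Lemma~\ref{reduce to varphi}) yields $\int_{B_1}|w|^2|\nabla\phi_1|^2\leq C\varepsilon_0\int_{B_1}|\nabla w|^2$. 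You also misplace the role of the hypothesis $\psi_1\in W^{1,r}$ with $r>4/3$: it is not used to bound $\mathcal{R}$ (only $\|\psi_1\|_{L^4}$ enters there), but is precisely what makes the $L^\infty$ estimate for $B$ in Proposition~\ref{L infty B} and the oscillation estimate for $P$ in Proposition~\ref{osc p} work, and hence the Hardy-space membership of $|\nabla\phi_1|^2$.
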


Here, we give a remark about our assumptions.
\begin{rmk}\label{technique}
  First, we prove the energy convexity by showing the following
\begin{equation}
\begin{split}
 \Phi&:=\int_{B_1}|\nabla\phi_2|^2-\int_{B_1}|\nabla\phi_1|^2-\int_{B_1}|\nabla\phi_2-\nabla\phi_1|^2\\
 &\geq -\frac12\int_{B_1}|\nabla{\phi_2}-\nabla{\phi_1}|^2.
 \end{split}
\end{equation}
To estimate the left-hand side of the inequality above, we divide $\Phi$ into two terms by using the equation of $\phi_1$. One is from the second fundamental form, the other comes from the curvature, which is non-zero for coupled Dirac-harmonic maps. The first term is a good term and can be controlled by $\varepsilon_0\int_{B_1}|\nabla{\phi_2}-\nabla{\phi_1}|^2$ as in the harmonic map case, while the second term from the curvature is a bad term. In general, we can only control it by $\varepsilon_0(\int_{B_1}|\nabla{\phi_2}-\nabla{\phi_1}|^2)^{1/2}$. Therefore, when the energy of $\phi_2$ is also small, this bad term can be bigger than the first term. In this case, this bad term is a dominant term. To control it, we need the assumption \eqref{1 dh}. Under this assumption, the bad term is less than the first term multiplied by $c_0^{-1}$. Therefore, the bad term is controlled by $c_0^{-1}\varepsilon_0^{1+\frac{p}{2}}\int_{B_1}|\nabla{\phi_2}-\nabla{\phi_1}|^2$, and we can make the coefficient smaller than $\frac{1}{2}$ by choosing a suitable $p$.

Second, the condition $r>4/3$ is used to give the estimate of $L^\infty$ norm for $B$, see Proposition \ref{L infty B}. When $r=4/3$, the curvature term $\mathcal{R}(\phi_1,\psi_1)$ (see \eqref{curvature term}) only belongs to $L^1$, and it is well-known that in general the solution to the following equation
\begin{equation*}
    \Delta u=f\in L^1(B_1)
\end{equation*}
 does not even belong to the $H^1$ space.
\end{rmk}

Now, with the convexity \eqref{energy convex} in hands, it is natural to prove the uniqueness of coupled Dirac-harmonic maps by assuming $\phi_2$ is also a coupled Dirac-harmonic map. As a direct consequence of the above energy convexity, we have the following statement.

\begin{cor}\label{main thm}
Given any two weakly coupled Dirac-harmonic maps $(\phi_1,\psi_1)$, $(\phi_2,\psi_2)$ with $\phi_i\in W^{1,2}(B_1,N)$ and $\psi_i\in W^{1,r}(B_1,\Sigma B_1\otimes\phi_i^*TN)$ for some $r>4/3$ and
\begin{equation}\label{dhs}
    \int_{B_1}|\phi_1-\phi_2|^2|\nabla\phi_i|^2\geq c_0^2, \ \forall  i=1,2
\end{equation} for some positive constant $c_0$.
There exists a constant $\varepsilon_0$ depending on the target manifold $N$ and $\|\psi_i\|_{W^{1,r}}$ such that
if
\begin{equation}\label{spinor energy p}
    E(\phi_i):=\int_{B_1}|\nabla\phi_i|^2\leq\varepsilon_0, \ \ E(\psi_i):=\int_{B_1}|\psi_i|^4\leq\varepsilon_0^p, \ \forall i=1,2
\end{equation}
for some constant $p=p(c_0)\geq1$ determined by \eqref{p} and
$\phi_1=\phi_2$ on $\partial B_1$,
then $\phi_1=\phi_2$ in $B_1$.
\end{cor}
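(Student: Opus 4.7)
The plan is to apply Theorem \ref{energy convexity} twice, with the roles of the two Dirac-harmonic maps exchanged, and then sum the two resulting inequalities. The hypothesis of the corollary has been formulated perfectly symmetrically: the non-degeneracy condition \eqref{dhs} is assumed for both $i=1$ and $i=2$, both maps satisfy the same small energy bounds \eqref{spinor energy p}, and $\phi_1=\phi_2$ on $\partial B_1$. The constant $\varepsilon_0$ should be taken as the minimum of the two constants that Theorem \ref{energy convexity} outputs corresponding to $\|\psi_1\|_{W^{1,r}}$ and $\|\psi_2\|_{W^{1,r}}$ respectively, which is exactly why the $\varepsilon_0$ in the corollary is allowed to depend on both spinor norms.

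First, applying Theorem \ref{energy convexity} with $(\phi_1,\psi_1)$ as the weakly Dirac-harmonic map and $\phi_2$ as the competitor — here \eqref{1 dh} is supplied by \eqref{dhs} with $i=1$ — yields
\begin{equation*}
    \tfrac{1}{2}\int_{B_1}|\nabla\phi_2-\nabla\phi_1|^2 \leq \int_{B_1}|\nabla\phi_2|^2 - \int_{B_1}|\nabla\phi_1|^2.
\end{equation*}
Swapping the roles of the two maps, and invoking \eqref{dhs} with $i=2$ to verify \eqref{1 dh}, Theorem \ref{energy convexity} produces the mirror inequality
\begin{equation*}
    \tfrac{1}{2}\int_{B_1}|\nabla\phi_1-\nabla\phi_2|^2 \leq \int_{B_1}|\nabla\phi_1|^2 - \int_{B_1}|\nabla\phi_2|^2.
\end{equation*}
Adding the two estimates the right-hand sides cancel exactly, giving $\int_{B_1}|\nabla(\phi_1-\phi_2)|^2 \leq 0$, and hence $\nabla\phi_1=\nabla\phi_2$ almost everywhere in $B_1$.

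Because $\phi_1-\phi_2 \in W^{1,2}(B_1)$ has vanishing trace on $\partial B_1$ together with vanishing weak gradient, the Poincar\'e inequality forces $\phi_1=\phi_2$ in $B_1$, which is precisely the conclusion. No new analytic obstacle appears at this stage; the hard work has already been absorbed into Theorem \ref{energy convexity}, and what remains is the standard symmetric comparison that converts two-sided energy convexity into uniqueness, in complete analogy with the way Theorem \ref{harmonic} produces uniqueness for harmonic maps. I note that the corollary asserts only equality of the map components $\phi_i$; equality of the spinor components is a separate question, which in the uncoupled case is handled through the nonlinear Dirac uniqueness result cited in Theorem \ref{uncoupled dh thm}.
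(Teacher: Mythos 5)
Your proposal is correct and follows exactly the route the paper intends. The paper labels Corollary \ref{main thm} a ``direct consequence'' of Theorem \ref{energy convexity} and gives no separate proof, and the implied argument is precisely the one you spell out: apply the energy convexity twice with the roles of the two Dirac-harmonic maps swapped (the symmetric hypothesis \eqref{dhs} for both $i=1,2$ exists exactly to license the second application), add the two inequalities so the right-hand sides cancel, conclude $\nabla(\phi_1-\phi_2)=0$ a.e., and use the vanishing trace to get $\phi_1=\phi_2$. Your remark that the statement concerns only the map components, and your observation about taking the minimum of the two admissible $\varepsilon_0$'s so that the constant may depend on both $\|\psi_i\|_{W^{1,r}}$, are both consistent with the paper.
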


Notice that the conclusion is in contradiction with the condition \eqref{dhs}. This reminds us to prove  the following estimate on the difference between the underlying maps by contradiction.

\begin{cor}\label{main cor}
Given any two weakly coupled Dirac-harmonic maps $(\phi_1,\psi_1)$, $(\phi_2,\psi_2)$ satisfying $\phi_i\in W^{1,2}(B_1,N)$, $\phi_1=\phi_2$ on $\partial B_1$ and $\psi_i\in W^{1,r}(B_1,\Sigma B_1\otimes\phi_i^*TN)$ for some $r>4/3$. 
There exists a constant $\varepsilon_0$ depending on the target manifold $N$ and $\|\psi_i\|_{W^{1,r}}$ such that
if the small energy condition \eqref{spinor energy p} holds for some constant $p\geq1$, then we have
\begin{equation}
    \int_{B_1}|\phi_1-\phi_2|^2|\nabla\phi_j|^2<c_0\leq \varepsilon_0^p, \ \forall   i=1,2,
\end{equation}where $c_0$ is determined by \eqref{p}.
\end{cor}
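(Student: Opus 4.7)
The approach, as indicated in the remark preceding the statement, is proof by contradiction via Corollary~\ref{main thm}. The core observation is that the hypothesis \eqref{dhs} and the conclusion $\phi_1 = \phi_2$ of Corollary~\ref{main thm} are mutually incompatible: if $\phi_1 \equiv \phi_2$ on $B_1$, then $\int_{B_1}|\phi_1 - \phi_2|^2 |\nabla \phi_j|^2 = 0$, which contradicts the strict positivity $\geq c_0^2 > 0$ demanded in \eqref{dhs}. Hence Corollary~\ref{main thm} can never actually be invoked under its stated hypotheses, and Corollary~\ref{main cor} is precisely the positive quantitative estimate encoding this incompatibility.

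Concretely, I would argue as follows. Assume for contradiction that the asserted bound fails; by the symmetry of the problem under interchange of the two Dirac-harmonic maps, one may arrange that for both $j=1,2$,
\begin{equation*}
\int_{B_1} |\phi_1 - \phi_2|^2 |\nabla \phi_j|^2 \geq c_0^2,
\end{equation*}
which is precisely condition \eqref{dhs} of Corollary~\ref{main thm}. All the remaining hypotheses required by Corollary~\ref{main thm} --- the regularity $\phi_i \in W^{1,2}(B_1,N)$ and $\psi_i \in W^{1,r}$ with $r > 4/3$, the matching boundary data $\phi_1 = \phi_2$ on $\partial B_1$, and the small-energy condition \eqref{spinor energy p} with the same exponent $p$ and constant $\varepsilon_0$ supplied by \eqref{p} --- are already part of the hypothesis set of Corollary~\ref{main cor}. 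Invoking Corollary~\ref{main thm} then forces $\phi_1 = \phi_2$ on $B_1$, contradicting the assumed strict positivity of the integrals. This contradiction yields the asserted bound.

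The main obstacle, and really the only work beyond citing Corollary~\ref{main thm}, is bookkeeping of the constants. One must verify that $\varepsilon_0$, $p$, and $c_0$ can be chosen consistently so that the hypotheses of Corollary~\ref{main thm} are genuinely satisfied under the negated conclusion of Corollary~\ref{main cor} --- in particular, that the $\varepsilon_0$ appearing here is at most the one fixed in Theorem~\ref{energy convexity} and Corollary~\ref{main thm}, and that the recorded bound $c_0 \leq \varepsilon_0^p$ is the quantitative output produced by the relation \eqref{p} defining $c_0$ and $p$ in terms of each other. Since this calibration has already been performed in the proofs of the preceding results, no new analytic input is required: Corollary~\ref{main cor} is essentially the contrapositive of Corollary~\ref{main thm}, restated as an explicit quantitative bound on the weighted $L^2$ distance between $\phi_1$ and $\phi_2$.
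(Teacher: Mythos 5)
Your high-level plan -- proof by contradiction via Corollary~\ref{main thm} -- matches what the paper hints at in the remark preceding the statement, but the crucial step ``by the symmetry of the problem \dots one may arrange that for both $j=1,2$'' contains a genuine logical gap. The negation of the asserted conclusion ``$\int_{B_1}|\phi_1-\phi_2|^2|\nabla\phi_j|^2<c_0$ for \emph{every} $j$'' is only the statement that \emph{some} $j\in\{1,2\}$ satisfies $\int_{B_1}|\phi_1-\phi_2|^2|\nabla\phi_j|^2\geq c_0 \geq c_0^2$. It does not follow, by symmetry or otherwise, that the same lower bound holds for both indices, and therefore hypothesis \eqref{dhs} of Corollary~\ref{main thm} is not automatically available. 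Relabelling $(\phi_1,\psi_1)\leftrightarrow(\phi_2,\psi_2)$ merely swaps which index satisfies the large-integral inequality; it cannot upgrade ``at least one'' to ``both.''

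To close the gap one needs a case analysis. Write $I_j=\int_{B_1}|\phi_1-\phi_2|^2|\nabla\phi_j|^2$ and $D=\int_{B_1}|\nabla\phi_1-\nabla\phi_2|^2$, and suppose for contradiction that, say, $I_1\geq c_0\geq c_0^2$. If also $I_2\geq c_0^2$, Corollary~\ref{main thm} gives $\phi_1=\phi_2$, contradicting $I_1\geq c_0>0$. If instead $I_2<c_0^2$, the clean energy convexity from Theorem~\ref{energy convexity} applies to $(\phi_1,\psi_1)$ (since $I_1\geq c_0^2$), yielding $\int|\nabla\phi_1|^2-\int|\nabla\phi_2|^2\leq -\tfrac12 D$. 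For $(\phi_2,\psi_2)$ the assumption \eqref{1 dh} fails, but the estimate preceding \eqref{only place} still gives the ``near-convexity''
\begin{equation*}
\int_{B_1}|\nabla\phi_1|^2-\int_{B_1}|\nabla\phi_2|^2-D \;\geq\; -C(N)I_2-C(N)\varepsilon_0^{p/2}I_2^{1/2} \;>\; -C(N)c_0^2-C(N)\varepsilon_0^{p/2}c_0,
\end{equation*}
and combining the two displays forces $D\lesssim c_0^2+\varepsilon_0^{p/2}c_0\lesssim\varepsilon_0^{1+p}$. Lemma~\ref{reduce to varphi} then bounds $I_1\leq C\varepsilon_0 D\lesssim\varepsilon_0^{2+p}$, which is strictly smaller than $c_0=4C(N)\varepsilon_0^{1+p/2}$ for $\varepsilon_0$ small, contradicting $I_1\geq c_0$. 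Only with this second branch does the argument actually establish the claimed bound for \emph{both} indices; invoking Corollary~\ref{main thm} alone, as your proposal does, is not sufficient.
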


Notice that the boundary values of the spinor fields may not be the same in our result.  Moreover, if we assume $\phi_i\in W^{1,2\alpha}$, we can remove the extra assumption on the regularity of the spinor fields.

\begin{thm}\label{unique DH}
Given any two coupled Dirac-harmonic maps $(\phi_1,\psi_1)$, $(\phi_2,\psi_2)$ with $\phi_i\in W^{1,2\alpha}(B_1,N)$ and $\psi_i\in W^{1,4/3}(B_1,\Sigma B_1\otimes\phi_i^*TN)$ for some $\alpha>1$, there exists a constant $\varepsilon_0$ only depending on the target manifold $N$  such that
if the small energy condition \eqref{spinor energy p} holds for some constant $p\geq1$ and the boundary values of the maps coincide, then we have
\begin{equation}
    \int_{B_1}|\phi_1-\phi_2|^2|\nabla\phi_j|^2< \varepsilon_0^p, \ \forall i=1,2.
\end{equation} 
\end{thm}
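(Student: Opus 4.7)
My plan is to reduce Theorem \ref{unique DH} to Corollary \ref{main cor}, by using the extra hypothesis $\phi_i\in W^{1,2\alpha}$ with $\alpha>1$ to bootstrap each spinor $\psi_i$ from $W^{1,4/3}$ up to $W^{1,r}$ for some $r=r(\alpha)>4/3$. Once this improved spinor regularity is established, Corollary \ref{main cor} applies directly and yields the claimed estimate $\int_{B_1}|\phi_1-\phi_2|^2|\nabla\phi_j|^2<\varepsilon_0^p$.

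The bootstrap rests on the nonlinear Dirac equation for a Dirac-harmonic map, which in local coordinates takes the schematic form $\slashed{\partial}\psi_i=\Gamma(\phi_i)(\nabla\phi_i,\psi_i)$, where $\Gamma$ is built from the Christoffel symbols of $N$ and is smooth and bounded on the compact image of $\phi_i$. Since $\phi_i\in W^{1,2\alpha}$ gives $\nabla\phi_i\in L^{2\alpha}$, and since $\psi_i\in W^{1,4/3}$ gives $\psi_i\in L^4$ via the two-dimensional Sobolev embedding $W^{1,4/3}\hookrightarrow L^4$, H\"older's inequality places the right-hand side in $L^s$ with
\begin{equation*}
\frac{1}{s}=\frac{1}{2\alpha}+\frac{1}{4},\qquad s=\frac{4\alpha}{\alpha+2}>\frac{4}{3}.
\end{equation*}
Interior $L^s$ Calder\'on--Zygmund regularity for the twisted Dirac operator then upgrades $\psi_i$ to $W^{1,s}_{\mathrm{loc}}$, with a quantitative bound of the form
\begin{equation*}
\|\psi_i\|_{W^{1,s}(B_{1/2})}\leq C\bigl(\|\nabla\phi_i\|_{L^{2\alpha}(B_1)}\,\|\psi_i\|_{L^4(B_1)}+\|\psi_i\|_{L^{4/3}(B_1)}\bigr),
\end{equation*}
where $C$ depends only on $N$. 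A rescaling and covering argument transfers this to a global $W^{1,s}(B_1)$ bound for the spinors on the original disk.

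With $r:=s>4/3$ now available, Corollary \ref{main cor} applies. The remaining task is to ensure that the resulting $\varepsilon_0$ ultimately depends only on $N$, rather than also on $\|\psi_i\|_{W^{1,r}}$. For this I would invoke an $\varepsilon$-regularity step for the coupled Dirac-harmonic system in the small-energy regime of \eqref{spinor energy p}: under the assumed smallness of $\|\nabla\phi_i\|_{L^2}$ and $\|\psi_i\|_{L^4}$, one expects $\|\nabla\phi_i\|_{L^{2\alpha}}$ and $\|\psi_i\|_{L^{4/3}}$ to be controlled by the small-energy quantities $\|\nabla\phi_i\|_{L^2}$ and $\|\psi_i\|_{L^4}$ respectively, times constants depending only on $N$. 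Substituting back into the display above converts the dependence of $\varepsilon_0$ on $\|\psi_i\|_{W^{1,r}}$ into a dependence on the small energies alone, as desired.

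The main obstacle is precisely this last quantitative $\varepsilon$-regularity step: for coupled Dirac-harmonic maps it is delicate because of the curvature term $\mathcal{R}(\phi,\psi)$ on the right-hand side of the map equation, which, as noted in Remark \ref{technique}, is a dominant bad term in the small-energy regime. One must verify that smallness of both $\|\nabla\phi\|_{L^2}$ and $\|\psi\|_{L^4}$ is strong enough to drive a Moser-type iteration for $\phi_i$ and thereby control $\|\nabla\phi_i\|_{L^{2\alpha}}$ by $N$ alone. Boundary issues are secondary, because the $W^{1,r}$ information enters Corollary \ref{main cor} only through the interior $L^\infty$ bound on $B$ highlighted in Remark \ref{technique}, which can be obtained on a slightly shrunk disk where the bootstrap above is valid.
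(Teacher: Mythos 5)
Your proposal takes a genuinely different route from the paper's, and that route has a gap. The paper's proof is essentially one line: if $\phi_i\in W^{1,2\alpha}$ with $\alpha>1$, then $|\nabla\phi_i|^2\in L^\alpha(B_1)$, which lies in the local Hardy space $h^1(B_1)$ directly. The whole point of Sections~4 and~5 (the gauge decomposition, the $L^\infty$ estimate on $B$, the oscillation estimate on $P$) was to prove $|\nabla\phi_1|^2\in h^1(B_1)$ under the weaker hypothesis $\phi_1\in W^{1,2}$, and that is the \emph{only} place the super-critical spinor regularity $r>4/3$ was used (via Proposition~\ref{L infty B}). Once membership in $h^1$ comes for free, Lemma~\ref{reduce to varphi} and the rest of Section~6 go through unchanged with $\psi_i\in W^{1,4/3}$ only, and the dependence of $\varepsilon_0$ on $\|\psi_i\|_{W^{1,r}}$ disappears because those sections are no longer invoked.

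Your proposal instead keeps the gauge-theoretic machinery and tries to upgrade $\psi_i$ to $W^{1,s}$ with $s>4/3$ so as to quote Corollary~\ref{main cor}. The bootstrap step is reasonable in spirit, but it only produces an \emph{interior} $W^{1,s}$ bound whose constant involves $\|\nabla\phi_i\|_{L^{2\alpha}(B_1)}$, and the theorem offers no quantitative control of that norm: the hypothesis $\phi_i\in W^{1,2\alpha}$ is purely qualitative. You acknowledge this and try to close the gap with an $\varepsilon$-regularity step to control $\|\nabla\phi_i\|_{L^{2\alpha}}$ by $N$ alone, but that step fails as stated: $\varepsilon$-regularity for (Dirac-)harmonic maps is an interior estimate and cannot give a global bound on $\|\nabla\phi_i\|_{L^{2\alpha}(B_1)}$ without a priori boundary regularity, which the theorem does not assume. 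Your claim that boundary issues are secondary because the $W^{1,r}$ information only enters through an ``interior'' $L^\infty$ bound on $B$ is also incorrect: Proposition~\ref{L infty B} is a global estimate on $B_1$ relying on the boundary condition $B=0$ on $\partial B_1$, Wente's lemma on the full disk, and the Poincar\'e inequality in $W^{1,2}_0(B_1)$, and the potential $\varphi$ in Lemma~\ref{reduce to varphi} solves a Dirichlet problem on all of $B_1$, so the convexity argument cannot be localized away from $\partial B_1$. The paper's short proof avoids every one of these obstacles by simply not invoking the gauge decomposition when $\alpha>1$.
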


As we mentioned in the Remark \ref{technique}, there is no bad term when the Dirac-harmonic map is uncoupled. Therefore, the $(\phi_i,\psi_i)$ in Corollary \ref{main cor} and Theorem \ref{unique DH} can be two arbitrary Dirac-harmonic maps with the same boundary values of the maps. Moreover, when the integral $  \int_{B_1}|\phi_1-\phi_2|^2|\nabla\phi_i|^2$ is equal to zero, then the $\Phi$ in Remark \ref{technique} is nonnegative. Therefore, the energy convexity is also true. Hence, we can combine  Corollary \ref{main cor} and Theorem \ref{unique DH}  into the following statement.

\begin{thm}\label{main thm'}
Given two  Dirac-harmonic maps $(\phi_1,\psi_1)$, $(\phi_2,\psi_2)$ satisfy $\phi_1=\phi_2$ on $\partial B_1$, assume $\phi_i\in W^{1,2}(B_1,N)$ and $\psi_i\in W^{1,r}(B_1,\Sigma B_1\otimes\phi_i^*TN)$ for some $r>4/3$ (or $\phi_i\in W^{1,2\alpha}(B_1,N)$ and $\psi_i\in W^{1,4/3}(B_1,\Sigma B_1\otimes\phi_i^*TN)$ for some $\alpha>1$).
There exists a constant $\varepsilon_0$ depending on the target manifold $N$ and $\|\psi_i\|_{W^{1,r}}$ (or only depending on $N$) such that
if the small energy condition \eqref{spinor energy p} holds for some $p\geq1$, then we have either $\phi_1=\phi_2$ on $B_1$ or
\begin{equation}\label{latter case}
    0<\int_{B_1}|\phi_1-\phi_2|^2|\nabla\phi_j|^2< \varepsilon_0^p, \ \forall i=1,2.
\end{equation} 
\end{thm}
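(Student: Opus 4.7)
My plan is to reduce Theorem \ref{main thm'} to two ingredients: the quantitative upper bound supplied by Corollary \ref{main cor} (respectively Theorem \ref{unique DH}) and a degenerate version of the energy convexity in Theorem \ref{energy convexity} in which the weighted $L^2$ distance vanishes. Setting $I_j := \int_{B_1}|\phi_1-\phi_2|^2|\nabla\phi_j|^2$, the first ingredient will deliver the upper half of \eqref{latter case} unconditionally, while the second will force $\phi_1\equiv\phi_2$ whenever $I_j = 0$ for some $j$. The theorem then follows from the dichotomy ``$I_j=0$ for some $j$'' versus ``$I_j>0$ for both $j$''.

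The first step is a direct citation. Under $\phi_i\in W^{1,2}$, $\psi_i\in W^{1,r}$ with $r>4/3$, I would apply Corollary \ref{main cor}; under $\phi_i\in W^{1,2\alpha}$, $\psi_i\in W^{1,4/3}$ with $\alpha>1$, I would apply Theorem \ref{unique DH}. In either regime the small-energy hypothesis \eqref{spinor energy p} with the prescribed $\varepsilon_0$ and $p$ produces
$$I_j < \varepsilon_0^p, \qquad j=1,2.$$
For the second step, if $I_1,I_2>0$ one is already in the second alternative of the theorem, so I only need to treat the degenerate case $I_j=0$ and show $\phi_1\equiv\phi_2$. The plan is to revisit the derivation of \eqref{energy convex}: as outlined in Remark \ref{technique}, the curvature-induced ``bad term'' in $\Phi$ is dominated by $C\,I_j^{1/2}\bigl(\int_{B_1}|\nabla\phi_2-\nabla\phi_1|^2\bigr)^{1/2}$, and the assumption \eqref{1 dh} is invoked only to absorb the factor $I_j^{1/2}$ into a controlled constant via $c_0^{-1}\cdot I_j$. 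When $I_j=0$ this factor vanishes and the bad term disappears altogether, so $\Phi\ge 0$ holds unconditionally; a fortiori the convexity \eqref{energy convex} is valid. Exchanging the roles of $(\phi_1,\psi_1)$ and $(\phi_2,\psi_2)$ — legitimate since both are Dirac-harmonic with the same trace on $\partial B_1$ — and adding the resulting inequalities forces $\int_{B_1}|\nabla\phi_1-\nabla\phi_2|^2\le 0$, whence $\phi_1\equiv\phi_2$ on $B_1$.

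The main obstacle I foresee is not in Corollary \ref{main cor} or Theorem \ref{unique DH}, which I treat as black boxes, but in verifying that the factor $c_0^{-1}$ in the proof of Theorem \ref{energy convexity} is introduced only through a single Cauchy--Schwarz step against $I_j^{1/2}$ and is not surreptitiously reintroduced elsewhere, e.g.\ through the $L^\infty$ estimate for $B$ in Proposition \ref{L infty B} or an auxiliary Wente-type inequality. Once this bookkeeping is confirmed — and the paragraph preceding the statement asserts exactly this — the degenerate convexity argument and the ensuing role-swap are routine, and the two cases assemble into Theorem \ref{main thm'}.
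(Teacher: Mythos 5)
Your proposal tracks the paper's argument closely: the upper bound $I_j<\varepsilon_0^p$ is imported from Corollary~\ref{main cor} (resp.\ Theorem~\ref{unique DH} in the $W^{1,2\alpha}$ regime), and the degenerate case $I_j=0$ is handled by noting that the curvature ``bad term'' in $\Phi$ is controlled by $C\varepsilon_0^{p/2}I_j^{1/2}$ (while the second-fundamental-form term is controlled by $CI_j$), so both vanish and $\Phi\ge 0$. This is exactly what the paragraph preceding Theorem~\ref{main thm'} asserts, so the skeleton of your proof coincides with the paper's.

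One step, however, needs more care than you give it -- and the paper is equally terse here, so this is worth making explicit rather than glossing over. The role-swap in your second step requires the convexity inequality from \emph{both} sides: $\Phi\ge 0$ with $\phi_1$ as the Dirac-harmonic map, \emph{and} the analogous $\Phi'\ge -\tfrac12\int|\nabla\phi_1-\nabla\phi_2|^2$ with $\phi_2$ as the Dirac-harmonic map. The degenerate argument on the $\phi_i$ side uses precisely $I_i=0$, while Theorem~\ref{energy convexity} on that side needs $I_i\ge c_0^2$. Hence the swap is not ``free'' merely because both maps are Dirac-harmonic with the same trace: if $I_1=0$ but $0<I_2<c_0^2$, then neither the degenerate estimate nor Theorem~\ref{energy convexity} controls the curvature term coming from $\mathcal{R}(\phi_2,\psi_2)$, and adding the two convexity inequalities is not justified. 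Since the dichotomy in \eqref{latter case} is stated with the quantifier ``$\forall\,i=1,2$'', this mixed case must in fact be excluded for the theorem to read as a genuine dichotomy, and you should either rule it out explicitly or restrict the degenerate step to the case $I_1=I_2=0$. In short: your approach is the paper's approach, but the sentence ``exchanging the roles $\dots$ is legitimate'' asserts more than it proves.
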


By the compactness of Dirac-harmonic maps, we can exclude the case in \eqref{latter case} and prove the following uniqueness result.

\begin{thm}\label{main uniqueness}
Let $i=1,2$. Given a boundary condition $(\varphi,{\bf B}\psi_0)$ with ${\bf B}\psi_0\not\equiv0$ and Dirac-harmonic map $(\phi_i,\psi_i)$ with $\phi_i\in W^{1,2\alpha}(B_1,N)$, $\psi_i\in W^{1,4/3}(B_1,\Sigma B_1\otimes\phi_i^*TN)$ for some $\alpha>1$.
There exists a constant $\varepsilon_0$ only depending on the target manifold $N$ and a constant $p>1$ such that if $(\phi_i,\psi_i)$ satisfy the small energy condition \eqref{spinor energy p} for $p_0$ and the following boundary condition
 \begin{equation}
\begin{cases}
  \phi_1=\phi_2=\varphi,\ & \text{on} \ \partial B_1,\\
  {\bf B}\psi_1={\bf B}\psi_2={\bf B}\psi_0, \ & \text{on} \ \partial B_1,
    \end{cases}
\end{equation}
then we have $(\phi_1,\psi_1)=(\phi_2,\psi_2)$ in $B_1$. 

Moreover, there is a universal constant $p_0$ such that $p\leq p_0$ unless there is a sequence of coupled Dirac-harmonic maps converging to the Dirac-harmonic map $(\phi_0,0)$, where $\phi_0$ is the unique harmonic map with boundary value $\varphi$.

\end{thm}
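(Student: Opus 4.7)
I would combine the dichotomy supplied by Theorem \ref{main thm'}, the spinor-uniqueness content of Theorem \ref{uncoupled dh thm}, and a compactness argument for small-energy Dirac-harmonic maps.

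First I would apply Theorem \ref{main thm'} to $(\phi_1,\psi_1)$ and $(\phi_2,\psi_2)$: it yields either $\phi_1 = \phi_2$ on $B_1$, or the strict bound \eqref{latter case}. In the former case both $\psi_1,\psi_2$ satisfy the same twisted Dirac equation, which once $\phi_1$ is fixed is linear in the spinor, with the common chiral boundary condition $\mathbf{B}\psi_i = \mathbf{B}\psi_0$. Uniqueness for this linear boundary value problem, as in the spinor part of Theorem \ref{uncoupled dh thm} (cf.\ \cite{chen2017estimates}), forces $\psi_1 = \psi_2$, completing the argument in this case.

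The remaining step is to rule out the case \eqref{latter case}, up to the stated exceptional sequence. I would argue by contradiction: suppose no universal $p_0$ suffices. Then for each $n$ there exist Dirac-harmonic maps $(\phi_1^n,\psi_1^n), (\phi_2^n,\psi_2^n)$ with boundary data $(\varphi,\mathbf{B}\psi_0)$, satisfying \eqref{spinor energy p} with exponents $p_n \to \infty$, and realising \eqref{latter case}. In particular $E(\psi_i^n) \le \varepsilon_0^{p_n} \to 0$ and $\int_{B_1}|\phi_1^n-\phi_2^n|^2|\nabla\phi_j^n|^2 \to 0$. If infinitely many of the pairs were uncoupled, Theorem \ref{uncoupled dh thm} would force $(\phi_1^n,\psi_1^n) = (\phi_2^n,\psi_2^n)$, contradicting \eqref{latter case}; hence every pair may be assumed to be coupled. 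Invoking the small-energy regularity and compactness theory for Dirac-harmonic maps (see e.g.\ \cite{wang2009regularity,jost2018geometric}), I extract a subsequence converging in $W^{1,2}(B_1) \times L^4(B_1)$ to a limit $(\phi_i^\infty,\psi_i^\infty)$; the $W^{1,2\alpha}$ hypothesis with $\alpha>1$ supplies, through Sobolev embedding, continuous convergence of $\phi_i^n$ up to $\partial B_1$, so $\phi_i^\infty = \varphi$ on $\partial B_1$. Since $\|\psi_i^n\|_{L^4} \to 0$, we have $\psi_i^\infty \equiv 0$, so each $\phi_i^\infty$ is a small-energy harmonic map with boundary trace $\varphi$. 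Theorem \ref{harmonic} then forces $\phi_1^\infty = \phi_2^\infty = \phi_0$, producing precisely the sequence of coupled Dirac-harmonic maps converging to $(\phi_0,0)$ foreseen in the statement.

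The main obstacle is the compactness step: one must obtain convergence strong enough to preserve the coupled Dirac-harmonic system and the map boundary trace $\varphi$ in the limit, which is exactly where the $W^{1,2\alpha}$ assumption with $\alpha>1$ is essential. A more conceptual subtlety is that the spinor boundary datum $\mathbf{B}\psi_0 \not\equiv 0$ is not inherited by the limit spinor $\psi_i^\infty \equiv 0$; this boundary-data discontinuity is the structural reason the exceptional sequence can arise and cannot be ruled out a priori.
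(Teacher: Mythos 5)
Your overall strategy matches the paper's --- the dichotomy from Theorem~\ref{main thm'}, the spinor-uniqueness step once the maps coincide, and a compactness argument --- and you make a couple of points explicit that the paper leaves implicit: the need to invoke the $W^{1,2\alpha}$ spinor-uniqueness of \cite{chen2017estimates} after establishing $\phi_1=\phi_2$, and the observation that uncoupled pairs already satisfy uniqueness so the blow-up sequence may be taken to consist of coupled maps. But your proposal misses the decisive contradiction, and in fact asserts its negation.

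After extracting a limit $(\bar\phi,\bar\psi)$ with $\bar\psi\equiv 0$, the paper's argument hinges on the convergence in the compactness theory of \cite{jost2018geometric} being strong enough that the limit is a Dirac-harmonic map \emph{with the same boundary value}: ${\bf B}\bar\psi={\bf B}\psi_0$ on $\partial B_1$. Since $\bar\psi\equiv 0$ forces ${\bf B}\bar\psi\equiv 0$, this contradicts the hypothesis ${\bf B}\psi_0\not\equiv 0$ and shows that \eqref{latter case} simply cannot occur for a fixed boundary datum with nontrivial spinor part; that is what proves the main uniqueness assertion. You instead declare that the spinor boundary datum ``is not inherited by the limit spinor'' and treat this supposed boundary-trace discontinuity as the structural reason the exceptional sequence cannot be ruled out a priori. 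This reverses the logic: trace continuity is precisely how the hypothesis ${\bf B}\psi_0\not\equiv 0$ does its work, and if the trace were lost, the first half of the theorem would become conditional rather than unconditional. The exceptional sequence converging to $(\phi_0,0)$ enters only in the second half, about a \emph{universal} $p_0$; there one must allow a sequence of \emph{different} spinor boundary data ${\bf B}\psi_0^k$ (with $p_k\to\infty$), so the limit carries boundary datum $(\varphi,0)$ and no contradiction arises. Because you keep ${\bf B}\psi_0$ fixed along the sequence yet never exploit that it is nonzero, your argument establishes neither the unconditional uniqueness nor the clean separation between the fixed-boundary and universal-$p_0$ statements.
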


\begin{rmk}
For the case of $\phi\in W^{1,2}(B_1,N)$,  we can also prove the uniqueness of the map components in the same way as Theorem \ref{main uniqueness}. However, the uniqueness of the spinor fields is not known. This is the reason why we cannot write down a similar uniqueness result in this paper. 
\end{rmk}

The rest of the paper is organized as follows: In Section 2 and Section 3, we recall some facts about Dirac-harmonic maps and  Rivi\`ere's gauge decomposition, respectively. In Section 4 and Section 5, we prove two estimates on two important quantities given by Rivi\`ere's gauge decomposition. In Section 6, we prove the Theorem \ref{energy convexity} and Corollary \ref{main thm}. In Section 7, we prove the Theorem \ref{unique DH} and Theorem \ref{main uniqueness}.

{\bf Acknowledgements}
The second author would like to thank the support by the National Natural Science Foundation of China (Grant No. 12201440) and the Fundamental Research Funds for the Central Universities.
 Our  gratitude goes to the anonymous reviewers for their careful work and thoughtful suggestions that have helped to improve this paper.

\vspace{2em}

\section{Preliminaries}

Let $(M, g)$ be a compact Riemann surface with a fixed spin structure. On the complex spinor bundle $\Sigma M$, we denote the Hermitian inner product by $\langle\cdot, \cdot\rangle_{\Sigma M}$. For any $X\in\Gamma(TM)$ and $\xi\in\Gamma(\Sigma M)$, the Clifford multiplication satisfies the following skew-adjointness:
\begin{equation*}
\langle X\cdot\xi, \eta\rangle_{\Sigma M}=-\langle\xi, X\cdot\eta\rangle_{\Sigma M}.
\end{equation*}
Let $\nabla$ be the Levi-Civita connection on $(M,g)$. There is a unique  connection (also denoted by $\nabla$) on $\Sigma M$ compatible with $\langle\cdot, \cdot\rangle_{\Sigma M}$.  Choosing a local orthonormal basis $\{e_{\beta}\}_{\beta=1,2}$ on $M$, the usual Dirac operator is defined as $\slashed\partial:=e_\beta\cdot\nabla_\beta$, where $\beta=1,2$. Here and in the sequel, we use the Einstein summation convention. One can find more about spin geometry in \cite{lawson1989spin}.

Let $\phi$ be a smooth map from $M$ to another compact Riemannian manifold $(N, h)$ of dimension $n\geq2$. Let $\phi^*TN$ be the pull-back bundle of $TN$ by $\phi$ and consider the twisted bundle $\Sigma M\otimes \phi^*TN$. On this bundle there is a metric $\langle\cdot,\cdot\rangle_{\Sigma M\otimes \phi^*TN}$ induced from the metric on $\Sigma M$ and $\phi^*TN$. Also, we have a connection $\tilde\nabla$ on this twisted bundle naturally induced from those on $\Sigma M$ and $\phi^*TN$. In local coordinates $\{y^\mu\}_{\mu=1,\dots,n}$, the section $\psi$ of $\Sigma M\otimes \phi^*TN$ is written as
$$\psi=\psi^\mu\otimes\partial_{y^\mu}(\phi),$$
where each $\psi^\mu$ is a usual spinor on $M$. We also have the following local expression of $\tilde\nabla$
$$\tilde\nabla\psi=\left(\nabla\psi^\mu+\Gamma_{\lambda\sigma}^\mu(\phi)\nabla \phi^\lambda\cdot\psi^\sigma\right)\otimes\partial_{y^\mu}(\phi),$$
where $\Gamma^\mu_{\lambda\sigma}$ are the Christoffel symbols of the Levi-Civita connection of $N$. The Dirac operator along the map $\phi$ is defined as
\begin{equation}\label{dirac}
\slashed{D}\psi:=e_\alpha\cdot\tilde\nabla_{e_\alpha}\psi=\left(\slashed\partial\psi^\mu+\Gamma_{\lambda\sigma}^\mu(\phi)\nabla_{e_\alpha}\phi^\lambda(e_\alpha\cdot\psi^\sigma)\right)\otimes\partial_{y^\mu}(\phi),
\end{equation}
which is self-adjoint (see \cite{jost2017riemannian}). Sometimes, we use $\slashed{D}^\phi$ to distinguish the Dirac operators along different maps. In \cite{chen2006dirac}, the authors  introduced the  functional
\begin{equation*}\begin{split}
L(\phi,\psi)&:=\frac12\int_M\left(|d\phi|^2+\langle\psi,\slashed{D}\psi\rangle_{\Sigma M\otimes\phi^*TN}\right)\\
&=\frac12\int_M\left( h_{\lambda\sigma}(\phi)g^{\alpha\beta}\frac{\partial \phi^\lambda}{\partial x^\alpha}\frac{\pt \phi^\sigma}{\pt x^\beta}+h_{\lambda\sigma}(\phi)\langle\psi^\lambda,\slashed{D}\psi^\sigma\rangle_{\Sigma M}\right).
\end{split}
\end{equation*}
They computed the Euler-Lagrange equations of $L$:
\begin{numcases}{}
   \tau^\kappa(\phi)-\frac12R^\kappa_{\lambda\mu\sigma}\langle\psi^\mu,\nabla \phi^\lambda\cdot\psi^\sigma\rangle_{\Sigma M}=0,  \label{eldh1}\\
   \slashed{D}\psi^\mu:=\slashed\partial\psi^\mu+\Gamma_{\lambda\sigma}^\mu(\phi)\nabla_{e_\alpha}\phi^\lambda(e_\alpha\cdot\psi^\sigma)=0, \label{eldh2}
\end{numcases}
where $\tau^\kappa(u)$ is the $\kappa$-th component of the tension field \cite{jost2017riemannian} of the map $\phi$ with respect to the coordinates on $N$, $\nabla\phi^\lambda\cdot\psi^\sigma$ denotes the Clifford multiplication of the vector field $\nabla\phi^\lambda$ with the spinor $\psi^\sigma$, and $R^\kappa_{\lambda\mu\sigma}$ stands for the components of the Riemann curvature tensor of the target manifold $N$. By denoting
\begin{equation}\label{curvature term}
    \mathcal{R}(\phi,\psi):=\frac12R^\kappa_{\lambda\mu\sigma}\langle\psi^\mu,\nabla \phi^\lambda\cdot\psi^\sigma\rangle_{\Sigma M}\pt_{y^\kappa},
\end{equation}
 we can write \eqref{eldh1} and \eqref{eldh2} in the following global form:
\begin{numcases}{}
\tau(\phi)=\mathcal{R}(\phi,\psi), \label{geldh1} \\
\slashed{D}\psi=0,  \label{geldh2}
\end{numcases}
and call the solutions $(\phi,\psi)$ Dirac-harmonic maps from $M$ to $N$.

By \cite{nash1956imbedding}, we can isometrically embed $N$ into a Euclidean space $\mathbb{R}^K$ for some constant $K$. Then \eqref{geldh1}-\eqref{geldh2} is equivalent to the  following system:
\begin{numcases}{}
		\Delta_g{\phi}=II(d\phi,d\phi)+Re(P(\mathcal{S}(d\phi(e_\beta),e_{\beta}\cdot\psi);\psi)), \label{map eq}\\
		\slashed{\partial}\psi=\mathcal{S}(d\phi(e_\beta),e_{\beta}\cdot\psi).
\end{numcases}
Here $II$ is the second fundamental form of $N$ in $\mathbb{R}^K$,
\begin{equation*}
\mathcal{S}(d\phi(e_\beta),e_{\beta}\cdot\psi):=(\nabla{\phi^i}\cdot\psi^j)\otimes II(\partial_{z^i},\partial_{z^j}),
\end{equation*}
\begin{equation*}
Re(P(\mathcal{S}(d\phi(e_\beta),e_{\beta}\cdot\psi);\psi)):=P(S(\partial_{z^k},\partial_{z^j});\partial_{z^i})Re(\langle\psi^i,d\phi^k\cdot\psi^j\rangle).
\end{equation*}
where $i,j,k=1,\dots,K$, $P(\xi;\cdot)$ denotes the shape operator, defined by
\begin{equation*}
  \nabla_X\xi=-P(\xi;X)+\nabla^\perp\xi, \forall X\in \Gamma(TN)
\end{equation*}
and $Re(\zeta)$ denotes the real part of $\zeta\in\mathbb{C}$ .

When $M$ has a boundary, it is natural to propose a well-defined  boundary condition for Dirac-harmonic maps. For the map component, one can still use the Dirichlet boundary condition. For the spinor part, the chiral boundary operator ${\bf B}={\bf B}^\pm$ was first introduced in \cite{chen2013boundary} as follows:
\begin{equation}\label{bdy op}
\begin{split}
{\bf B}^\pm: L^2(\partial M, (\Sigma M\otimes\phi^*TN)|_{\partial M})&\to L^2(\partial M, (\Sigma M\otimes\phi^*TN)|_{\partial M})\\
\psi&\mapsto\frac12(Id\otimes Id\pm{\bf n}\cdot G\otimes Id)\psi
\end{split}\end{equation}
where ${\bf n}$ is the outward unit normal vector field on $\partial M$ and $G$ is the chirality operator for usual spinors which was firstly introduced by Gibbons-Hawking-Horowitz-Perry \cite{Gibbons1983Positive} and satisfies
\begin{equation}
G^2=Id, \ G^*=G, \ \nabla G=0, G\cdot X\cdot=-X\cdot G, \forall X\in\Gamma(TM).
\end{equation}

With the aim to get a general existence scheme for  Dirac-harmonic maps, the short-time existence of the following heat flow for Dirac-harmonic maps was proved in \cite{chen2017estimates}:
\begin{equation}
\begin{cases}
	\partial_t\phi=\tau(\phi)-\mathcal{R}(\phi,\psi),\ & \text{on}\   (0,T)\times M, \\
		\slashed{D}\psi=0,\ & \text{on} \   [0,T]\times M.	
\end{cases}
\end{equation}
Such a flow may develop singular points, where one can get the bubbles. In general, these bubbles could be Dirac-harmonic maps from spheres and nobody knows how to exclude them. To avoid this issue, the authors in \cite{jost2018geometric} used the $\alpha$-Dirac-harmonic map flow instead and solved the boundary value problem for Dirac-harmonic maps.

At the same time, we successfully applied $\alpha$-Dirac-harmonic map flow to the existence problem of Dirac-harmonic maps from closed surfaces, see our paper \cite{jost2019short}. In this case, the difficulty is that we can not uniquely solve the Dirac equation. To overcome this, we assume the Dirac operator along the initial map has minimal dimensional kernel as in \cite{wittmann2017short} to get the short-time existence of the $\alpha$-Dirac-harmonic map flow. However, the flow may still develop singularity, where the dimension of the kernel of the Dirac operator increases along the flow. This is different from the compact case. At those singular maps, the corresponding Dirac operators have high dimensional kernels which may split in general. In this case, the long-time existence of the flow is hard to achieve. Instead, we use the density of the maps along which the Dirac operators have minimal dimensional kernels in the homotopy class to restart the flow whenever we approach a singular point. In this way, we can get the existence of $\alpha$-Dirac-harmonic maps from closed surfaces. Then the existence of Dirac-harmonic maps follows from the standard blow-up analysis.

\vspace{2em}

\section{Rivi\`ere's gauge decomposition}
In \cite{riviere2007conservation}, Rivi\`ere used the algebraic feature of $\Omega$, namely $\Omega$ being antisymmetric, to construct $\xi\in W^{1,2}_0(B_1, so(K))$ and a gauge transformation matrix $P\in W^{1,2}\cap L^{\infty}(B_1, SO(K))$ (which pointwisely almost everywhere is an orthogonal matrix in $\mathbb{R}^{K\times K}$) satisfying some good properties.

\begin{thm}\cite[Lemma A.3]{riviere2007conservation}\label{xip}
There exists $\varepsilon>0$ and $C>0$ such that for every $\Omega\in L^2(B_1, so(K)\otimes\wedge^1\mathbb{R}^2)$ satisfying
\begin{equation*}
    \int_{B_1}|\Omega|^2\leq\varepsilon,
\end{equation*}
there exists $\xi\in W^{1,2}_0(B_1,so(K))$ and $P\in W^{1,2}(B_1,SO(K))$ such that
\begin{numcases}{}
	\nabla^\perp{\xi}=P^T\nabla P+P^T\Omega P \   \text{in} \   B_1, \label{xi} \\
	\xi=0\     \text{on} \   \partial B_1,	
\end{numcases}
and
\begin{equation}
    \|\nabla{\xi}\|_{L^2(B_1)}+\|\nabla{P}\|_{L^2(B_1)}\leq C\|\Omega\|_{L^2(B_1)}.
\end{equation}
Here the superscript $T$ denotes the transpose of a matrix.
\end{thm}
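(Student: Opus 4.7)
The plan is to construct $P$ and $\xi$ via a direct variational argument for the Coulomb gauge. Introduce the functional
\[
F(Q) := \int_{B_1} |\nabla Q + \Omega Q|^2\, dx
\]
on the admissible class $\mathcal{A} := \{Q \in W^{1,2}(B_1, \mathbb{R}^{K\times K}) : Q(x) \in SO(K) \text{ a.e.}\}$, and look for a minimizer $P \in \mathcal{A}$. The guiding observation is that $Q^T Q = Id$ pointwise gives $|\nabla Q + \Omega Q|^2 = |Q^T \nabla Q + Q^T \Omega Q|^2$, so the one-form $A := P^T\nabla P + P^T\Omega P$ that we ultimately want to recognize as $\nabla^\perp \xi$ is precisely what $F$ measures; note also that $A \in so(K)$ since differentiating $P^T P = Id$ shows $P^T\nabla P$ is skew, and conjugation by $P$ preserves the skew condition on $\Omega$.

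First I would establish existence of a minimizer by the direct method. From $F(Id) = \|\Omega\|_{L^2}^2$ and the triangle inequality $\|\nabla Q\|_{L^2} \leq F(Q)^{1/2} + \|\Omega\|_{L^2}\|Q\|_{L^\infty}$, any minimizing sequence $\{P_n\}$ is bounded in $W^{1,2}$ and uniformly in $L^\infty$. Extracting a weakly convergent subsequence $P_n \rightharpoonup P$ in $W^{1,2}$, Rellich--Kondrachov in dimension two yields $P_n \to P$ in every $L^q$ and a.e., so the constraint $P^T P = Id$ survives in the limit. For lower semicontinuity of $F$, the nontrivial point is to pass to the limit in $\Omega P_n$; I would apply dominated convergence with dominator $C|\Omega|^2 \in L^1$ to get $\Omega P_n \to \Omega P$ strongly in $L^2$, and combine this with $\nabla P_n \rightharpoonup \nabla P$ weakly in $L^2$ to conclude $F(P) \leq \liminf_n F(P_n)$.

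Next I would derive the Euler--Lagrange equation by taking variations tangent to $SO(K)$. Given $U \in C^\infty(\overline{B_1}, so(K))$, set $P_t := P\exp(tU)$ and write $X := \nabla P + \Omega P = PA$. Differentiating at $t = 0$ yields
\[
\tfrac{1}{2}\tfrac{d}{dt}\Big|_{t=0} F(P_t) = \int_{B_1} \bigl(\langle X, XU\rangle + \langle X, P\nabla U\rangle\bigr) = \int_{B_1} \bigl(-\mathrm{tr}(A^2 U) + \langle A, \nabla U\rangle\bigr).
\]
Because $A \in so(K)$ makes $A^2$ symmetric while $U$ is skew, the trace term vanishes, leaving $\int_{B_1} \langle A, \nabla U\rangle = 0$ for every admissible $U$. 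Testing against $U$ of compact support gives $\mathrm{div}\, A = 0$ in $B_1$, and the natural boundary condition $A\cdot \nu = 0$ on $\partial B_1$ follows by allowing $U$ to have nonzero trace.

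Finally, since $A$ is a divergence-free $so(K)$-valued $L^2$ vector field on the simply connected disk with vanishing normal trace, Hodge decomposition produces a unique $\xi \in W^{1,2}(B_1, so(K))$, up to an additive constant, with $A = \nabla^\perp \xi$; the vanishing of $A\cdot\nu$ translates into vanishing of the tangential derivative of $\xi$ on $\partial B_1$, so $\xi$ is constant there and can be normalized to zero, placing $\xi$ in $W^{1,2}_0$. The estimates then follow from $F(P) \leq F(Id) = \|\Omega\|_{L^2}^2$: indeed $\|\nabla \xi\|_{L^2} = \|A\|_{L^2} = F(P)^{1/2} \leq \|\Omega\|_{L^2}$, and $\|\nabla P\|_{L^2} = \|P^T \nabla P\|_{L^2} \leq \|A\|_{L^2} + \|\Omega\|_{L^2} \leq 2\|\Omega\|_{L^2}$. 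The main obstacle I expect is the lower-semicontinuity step, owing to the nonconvex pointwise constraint $P(x) \in SO(K)$ and the nonlinear coupling of $\nabla P$ with $P$ through $\Omega P$; the smallness hypothesis $\|\Omega\|_{L^2}^2 \leq \varepsilon$ is not strictly needed for the variational construction itself but does play a role in alternative perturbative approaches near $P = Id$.
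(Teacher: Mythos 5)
Your variational construction is correct and, in fact, is precisely the direct-method proof of the Coulomb gauge lemma that Schikorra later gave as an alternative to Rivi\`ere's original argument. Rivi\`ere's Lemma A.3 is proved in \cite{riviere2007conservation} by an Uhlenbeck-style continuity/extension argument (establishing that the set of $\Omega$ admitting such a gauge is open and closed under the smallness threshold), whereas you bypass that machinery entirely by minimizing $F(Q)=\int_{B_1}|\nabla Q+\Omega Q|^2$ over $W^{1,2}(B_1,SO(K))$. Both routes are valid; yours is more elementary and, as you correctly sense, does not actually need the smallness hypothesis to produce $(\xi,P)$ with the stated bound (you get $\|\nabla\xi\|_{L^2}\le\|\Omega\|_{L^2}$ and $\|\nabla P\|_{L^2}\le 2\|\Omega\|_{L^2}$ unconditionally); the smallness is what makes the continuity method run, and is what the rest of Rivi\`ere's scheme (Theorem I.4, Wente estimates) genuinely requires. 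Two small clarifications: your worry about lower semicontinuity is unfounded --- the nonconvex constraint is handled because $\mathcal{A}$ is closed under a.e.\ convergence (Rellich), and $F$ is weakly lower semicontinuous because $\nabla Q_n+\Omega Q_n$ is the sum of a weakly convergent term $\nabla Q_n$ and a strongly convergent term $\Omega Q_n$ (dominated convergence with dominator $C|\Omega|\in L^2$), so the $L^2$ norm is lsc along the sequence; and you should note explicitly that $\det P_n\equiv 1$ passes to the limit a.e., so the minimizer lands in $SO(K)$ rather than just $O(K)$. With those remarks the argument is complete.
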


 Another important result from Rivi\`ere's work is the following theorem.

 \begin{thm}\cite[Theorem I.4]{riviere2007conservation}\label{AB}
 There exists $\varepsilon>0$ and $C>0$ such that for every $\Omega\in L^2(B_1, so(K)\otimes\wedge^1\mathbb{R}^2)$ satisfying
\begin{equation*}
    \int_{B_1}|\Omega|^2\leq\varepsilon,
\end{equation*}
there exists
\begin{equation*}
    \begin{split}
        \hat{A}&\in W^{1,2}\cap C^0(B_1,{\rm GL}_{K}(\mathbb{R})),\\
        A=(\hat{A}+Id)P^T&\in W^{1,2}\cap L^{\infty}(B_1,{\rm GL}_{K}(\mathbb{R})),\\
        B&\in W^{1,2}_0(B_1, M_{K}(\mathbb{R}))
    \end{split}
\end{equation*}
such that
\begin{equation}\label{nabla omega}
    \nabla A-A\Omega=\nabla^\perp B
\end{equation}
and
\begin{equation}
    \|\hat{A}\|_{W^{1,2}(B_1)}+\|\hat{A}\|_{L^{\infty}(B_1)}+\|B\|_{W^{1,2}(B_1)}\leq C\|\Omega\|_{L^2(B_1)}.
\end{equation}
 \end{thm}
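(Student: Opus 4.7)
The plan is to use the gauge from Theorem \ref{xip} to recast the equation $\nabla A - A\Omega = \nabla^\perp B$ as an elliptic system for $\hat A$ and $B$ whose right-hand sides are Jacobian-type quantities, then invoke the Coifman--Lions--Meyer--Semmes Hardy-space theorem and a contraction argument to produce a solution with the claimed norm estimates.

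First, I would apply Theorem \ref{xip} to obtain $P \in W^{1,2}(B_1, SO(K))$ and $\xi \in W^{1,2}_0(B_1, so(K))$ satisfying $\nabla^\perp\xi = P^T\nabla P + P^T\Omega P$. Differentiating the identity $PP^T = Id$ yields $\nabla P^T = -P^T\nabla P \cdot P^T$, which combined with the gauge identity gives the key algebraic relation
\[
\nabla P^T - P^T\Omega \;=\; -\nabla^\perp\xi \cdot P^T.
\]
Making the ansatz $A = (\hat A + Id)P^T$ with $\hat A$ to be determined, a direct computation using this identity reduces the target equation $\nabla A - A\Omega = \nabla^\perp B$ to
\[
\nabla\hat A - (\hat A + Id)\nabla^\perp\xi \;=\; \nabla^\perp B \cdot P.
\]

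Next I would take the divergence and curl of this reduced equation to obtain a coupled elliptic system for $\hat A$ and $B$. Using $\dive(\nabla^\perp\xi) = 0$ and $\dive(\nabla^\perp B) = 0$, the leading nonlinear terms collapse, via antisymmetry of $\epsilon$, to Jacobian-type expressions of the form $\epsilon_{\alpha\beta}\partial_\alpha\hat A\,\partial_\beta\xi$ and $\epsilon_{\alpha\beta}\partial_\alpha P\,\partial_\beta B$, which by the Coifman--Lions--Meyer--Semmes theorem lie in the local Hardy space $\mathcal{H}^1(B_1)$. The Wente-type estimate (Lemma \ref{wente lemma}) then upgrades solutions of $\Delta u = \text{Jacobian}(f,g)$ from $W^{1,2}$ to $W^{2,1}\hookrightarrow C^0$, with the bilinear bound $\|\nabla u\|_{L^2} + \|u\|_{L^\infty} \le C\|\nabla f\|_{L^2}\|\nabla g\|_{L^2}$. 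I would then construct $(\hat A, B)$ by Banach fixed-point iteration in $(W^{1,2}\cap L^\infty)\times W^{1,2}_0$: at each step the two linearized Poisson problems are solved by Wente, and since Theorem \ref{xip} provides $\|\nabla\xi\|_{L^2} + \|\nabla P\|_{L^2} \le C\|\Omega\|_{L^2} \le C\sqrt{\varepsilon}$, the iteration is a genuine contraction once $\varepsilon$ is chosen small enough. The fixed point satisfies the stated norm bounds, and $A = (\hat A + Id)P^T \in W^{1,2}\cap L^\infty$ inherits its regularity from $P$ and $\hat A$.

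The main obstacle is securing the $L^\infty$ bound on $\hat A$: in two dimensions $W^{1,2}$ does not embed into $L^\infty$, so the Hardy-space compensated compactness for Jacobians is indispensable. It is precisely the antisymmetry of $\Omega$, inherited by $\xi$ through $\xi\in so(K)$, that generates the Jacobian structure after differentiation and enables Wente's gain of two derivatives in $L^1$. Care must be taken to track the constants in the Wente estimate so that contractivity holds at the same $\varepsilon$ for which Theorem \ref{xip} produces the gauge.
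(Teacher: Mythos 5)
The paper does not prove Theorem~\ref{AB}; it cites it from Rivi\`ere's 2007 Inventiones paper, so the comparison must be against Rivi\`ere's original argument. Your proposal captures the architecture of that argument correctly: apply Theorem~\ref{xip} to produce the Coulomb gauge $(P,\xi)$, make the ansatz $A=(\hat A+Id)P^T$, and the reduced equation
\begin{equation*}
\nabla\hat A-(\hat A+Id)\nabla^\perp\xi=\nabla^\perp B\cdot P
\end{equation*}
that you derive is correct, as is the algebraic identity $\nabla P^T-P^T\Omega=-\nabla^\perp\xi\cdot P^T$. The Wente/Hardy-space bounds and a fixed-point scheme at small $\|\Omega\|_{L^2}$ are indeed the right tools.

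The gap is in the step ``take divergence and curl to obtain a coupled elliptic system \dots\ [with] Jacobian-type expressions $\epsilon_{\alpha\beta}\partial_\alpha\hat A\,\partial_\beta\xi$ and $\epsilon_{\alpha\beta}\partial_\alpha P\,\partial_\beta B$.'' The divergence of the reduced equation does give a clean Jacobian Poisson equation for $\hat A$, namely $\Delta\hat A=\nabla\hat A\cdot\nabla^\perp\xi+\nabla^\perp B\cdot\nabla P$, and Wente's lemma applies there. But the curl does \emph{not} produce a Jacobian Poisson equation for $B$. Writing $\tilde A=\hat A+Id$, the curl of the reduced equation reads $\dive(\tilde A\nabla\xi+\nabla B\cdot P)=0$, a constraint rather than a Poisson equation; and if one instead tries to compute $\Delta B=\mathrm{curl}\bigl((\nabla\tilde A-\tilde A\nabla^\perp\xi)P^T\bigr)$ directly, the term $\mathrm{curl}(\tilde A\,\nabla^\perp\xi\,P^T)$ expands to $\nabla\tilde A\cdot\nabla\xi\,P^T+\tilde A\,\Delta\xi\,P^T+\tilde A\,\nabla\xi\cdot\nabla P^T$, none of which is a Jacobian; worse, $\Delta\xi=\mathrm{curl}(P^T\nabla P+P^T\Omega P)$ contains the distributional contribution $\mathrm{curl}(P^T\Omega P)$, which is not controlled by $\|\Omega\|_{L^2}$ and certainly not an $L^1$ Jacobian. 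So Wente's lemma, which requires $\nabla a\cdot\nabla^\perp b$ structure, does not apply to the $B$ half of the system as you have set it up, and your two ``linearized Poisson problems'' do not both exist in the form you assert. Closing the system for $B$ --- either by determining $B$ as the divergence-free Hodge projection of $(\nabla\tilde A-\tilde A\nabla^\perp\xi)P^T$ (which is bounded in $L^2$ during the iteration even before it is divergence-free) and tracking the curl-free remainder, or by exploiting cancellations specific to the coupled system --- is precisely the technical heart of Rivi\`ere's proof, and your proposal leaves it unaddressed.

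A smaller point: the Jacobian structure is generated by the $\nabla^\perp$ coming from the Coulomb gauge, not by the antisymmetry of $\xi$ per se; the antisymmetry of $\Omega$ is what makes the gauge construction (Theorem~\ref{xip}) possible in the first place.
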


In \cite{chen2013boundary}, the authors rewrite \eqref{map eq} in the following form
\begin{equation}\label{rewrite map eq}
\begin{split}
    -\Delta\phi^m&=\Omega^m_i\cdot\nabla\phi^i\\
    &=(f^m_{il}\nabla\phi^l+\omega^m_i)\cdot\nabla\phi^i
    \end{split}
\end{equation}
 with $\Omega=(\Omega^m_i)_{1\leq i,m\leq K}\in L^2(B_1, so(K)\otimes\wedge^1\mathbb{R}^2)$ and $\omega^m_i$ is a second order term of $\psi$ involving the curvature of the target manifold. Moreover, it is easy to see that
 \begin{equation}
     \int_{B_1}|\Omega|^2\leq C(N)\left(\int_{B_1}|\nabla\phi|^2+|\psi|^4\right)=C(N)E(\phi,\psi).
 \end{equation}
 Combining \eqref{rewrite map eq} and \eqref{nabla omega}, we have the conservation law
\begin{equation}\label{conservation law}
    \dive(A\nabla\phi+B\nabla^\perp\phi)=0.
\end{equation}

\vspace{2em}

\section{Global estimate on the matrix $B$}
In this section, we will show that $B$ is close to zero matrix if $E(\phi,\psi)\leq\varepsilon_0$ is sufficiently small and $\psi_1\in W^{1,r}$ for some $r>4/3$. This is the only place we can see the dependence of $\varepsilon_0$ on the super-critical regularity of the spinor field. Before proving the estimate, let us recall the Wente's lemma which will be used later.

\begin{lem}\cite{Wente1969existence}\label{wente lemma}
Let $a,b\in W^{1,2}(B_1,\mathbb{R})$ and let $u$ be the solution of
\begin{equation}
\begin{cases}
\Delta u=\nabla a\cdot\nabla^\perp b, \  & \text{in} \ B_1,\\
u=0, \  & \text{on} \ \partial B_1.
\end{cases}
\end{equation}
Then $u\in C^0\cap W^{1,2}(B_1,\mathbb{R})$ and the following estimate holds:
\begin{equation}
    \|u\|_{L^\infty(B_1)}+\|\nabla u\|_{L^2(B_1)}\leq C\|\nabla a\|_{L^2(B_1)}\|\nabla b\|_{L^2(B_1)}.
\end{equation}
\end{lem}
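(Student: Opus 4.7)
The plan is to exploit the hidden divergence structure of the right-hand side: since $\dive(\nabla^\perp b)=0$, one can rewrite
\begin{equation*}
    \nabla a\cdot\nabla^\perp b=\partial_1 a\,\partial_2 b-\partial_2 a\,\partial_1 b=\dive(a\nabla^\perp b),
\end{equation*}
so the equation takes the form $\Delta u=\dive(a\nabla^\perp b)$. This Jacobian-determinant structure is exactly the compensation phenomenon alluded to in the paper's introduction: although $\nabla a\cdot\nabla^\perp b$ is only in $L^1$ a priori, the cancellation between its two terms upgrades it to the real Hardy space $\mathcal{H}^1(\mathbb{R}^2)$.

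First I would reduce by density to the case of smooth $a,b$. Since the solution map $(a,b)\mapsto u$ is bilinear and $C^\infty(\overline{B_1})$ is dense in $W^{1,2}(B_1)$, the target estimate, once established for smooth data, passes to the $W^{1,2}$-limit by bilinearity. For smooth $a,b$ I would extend them smoothly to all of $\mathbb{R}^2$ with compact support (in a way controlled by $\|\nabla a\|_{L^2}$ and $\|\nabla b\|_{L^2}$) and split $u=u_1+u_2$, where $u_1$ is the Newtonian potential of $\nabla a\cdot\nabla^\perp b$ on $\mathbb{R}^2$ and $u_2$ is the harmonic correction on $B_1$ determined by $u_2|_{\partial B_1}=-u_1|_{\partial B_1}$.

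The heart of the argument is the Coifman-Lions-Meyer-Semmes bound
\begin{equation*}
    \|\nabla a\cdot\nabla^\perp b\|_{\mathcal{H}^1(\mathbb{R}^2)}\leq C\|\nabla a\|_{L^2}\|\nabla b\|_{L^2}.
\end{equation*}
With this in hand, the $L^\infty$-bound on $u_1$ follows from Hardy-$\mathrm{BMO}$ duality together with the fact that the fundamental solution $G(x,\cdot)$ has uniformly bounded $\mathrm{BMO}$-norm; equivalently, Calderon-Zygmund theory applied to the second derivatives of the Newton potential yields $u_1\in W^{2,1}_{\mathrm{loc}}(\mathbb{R}^2)$, and the critical two-dimensional Sobolev embedding $W^{1,1}\hookrightarrow L^2$ applied to $\nabla u_1$ produces the $W^{1,2}$-bound. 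The harmonic piece $u_2$ inherits both bounds on $\overline{B_1}$ by the maximum principle and by a standard energy estimate in terms of its $C^0(\partial B_1)$ boundary data, which are controlled by $\|u_1\|_{C^0(\overline{B_1})}$.

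The main obstacle is the sharp Hardy space bound for the Jacobian: without it the right-hand side would merely lie in $L^1$, and as the paper itself notes in Remark \ref{technique}, solutions to $\Delta u=f$ with $f\in L^1$ are in general neither continuous nor in $W^{1,2}$. All the essential content of the lemma sits in this compensation effect; once it is in place, both estimates reduce to classical elliptic regularity and Sobolev embedding in two dimensions.
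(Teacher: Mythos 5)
The paper itself gives no proof of this lemma; it is cited directly to Wente's 1969 paper (and the related Coifman--Lions--Meyer--Semmes work). Your proposal follows the modern Hardy-space route (Jacobian structure $\Rightarrow$ $\mathcal{H}^1$ bound $\Rightarrow$ duality with $\mathrm{BMO}$ for the $L^\infty$ bound, Calder\'on--Zygmund and critical Sobolev embedding for the gradient), which is the standard contemporary proof and is consistent with the narrative the paper sketches for the more general Rivi\`ere system, so the overall strategy is sound.

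However, there is one genuine error in the treatment of the harmonic correction $u_2$. You claim that $\|\nabla u_2\|_{L^2(B_1)}$ is controlled ``by a standard energy estimate in terms of its $C^0(\partial B_1)$ boundary data.'' No such estimate exists: the Dirichlet energy of a harmonic function is equivalent to the $H^{1/2}(\partial B_1)$ seminorm of its boundary trace, and $C^0$ does not control $H^{1/2}$. For instance, $v_n(re^{i\theta})=n^{-1/2}r^n\cos(n\theta)$ has boundary data of $C^0$-norm $n^{-1/2}\to 0$ while $\int_{B_1}|\nabla v_n|^2=\pi$ for all $n$. As written, the gradient estimate for $u_2$ therefore does not follow.

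The gap is readily repaired by either of two standard devices. One is to observe that you have already established $\|\nabla u_1\|_{L^2(\mathbb{R}^2)}\leq C\|\nabla a\|_{L^2}\|\nabla b\|_{L^2}$, so the trace $u_1|_{\partial B_1}$ is bounded in $H^{1/2}(\partial B_1)$ by the trace theorem, and the energy of its harmonic extension $u_2$ is then controlled. The other, cleaner, and what most expositions actually do, is to bypass the decomposition for the gradient bound entirely: once $\|u\|_{L^\infty(B_1)}\leq C\|\nabla a\|_{L^2}\|\nabla b\|_{L^2}$ is known, test the equation with $u$ (which vanishes on $\partial B_1$) to get
\begin{equation*}
\int_{B_1}|\nabla u|^2=-\int_{B_1}u\,\Delta u\leq\|u\|_{L^\infty(B_1)}\|\nabla a\cdot\nabla^\perp b\|_{L^1(B_1)}\leq C\bigl(\|\nabla a\|_{L^2}\|\nabla b\|_{L^2}\bigr)^2,
\end{equation*}
which is exactly the desired bound. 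With this correction, the argument is complete.
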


Now, let us state the main estimate in this section.

\begin{prop}\label{L infty B}
Under the assumptions of Theorem \ref{energy convexity} and let $r=\frac{4s}{4-s}$, we have
\begin{equation}\label{B}
    \|B\|_{L^\infty(B_1)}\leq C\varepsilon_0^{1/4},
\end{equation}
where $C=C\left(N,\|\psi_1\|_{L^{\frac{4s}{2-s}}(B_1)},\|\nabla\psi_1\|_{L^{\frac{4s}{4-s}}(B_1)}\right)$.
\end{prop}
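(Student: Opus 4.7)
The plan is to turn the gauge identity $\nabla A-A\Omega=\nabla^\perp B$ of Theorem \ref{AB} into a Poisson problem for $B$ with zero Dirichlet data, then split its right-hand side along the decomposition $\Omega=f(\phi_1)\nabla\phi_1+\omega$ from \eqref{rewrite map eq} and treat the two pieces by different tools. Taking the two-dimensional curl of \eqref{nabla omega} annihilates $\nabla A$ and turns $\nabla^\perp B$ into $\Delta B$, so that
\begin{equation*}
\Delta B=-\mathrm{curl}(A\Omega)\quad\text{in}\ B_1,\qquad B=0\quad\text{on}\ \partial B_1.
\end{equation*}
Write $B=B_\phi+B_\psi$, where $B_\phi,B_\psi\in W^{1,2}_0(B_1)$ solve the Dirichlet problems with sources $-\mathrm{curl}(Af(\phi_1)\nabla\phi_1)$ and $-\mathrm{curl}(A\omega)$ respectively; the task then reduces to estimating $\|B_\phi\|_{L^\infty}$ and $\|B_\psi\|_{L^\infty}$ separately.

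For $B_\phi$, the Leibniz rule collapses the source into the Jacobian determinant $\nabla(Af(\phi_1))\cdot\nabla^\perp\phi_1$ of two $W^{1,2}$ functions, so Wente's lemma (Lemma \ref{wente lemma}) gives
\begin{equation*}
\|B_\phi\|_{L^\infty}\leq C\|\nabla(Af(\phi_1))\|_{L^2}\|\nabla\phi_1\|_{L^2}.
\end{equation*}
Combining the bound $\|\nabla A\|_{L^2}\leq C\|\Omega\|_{L^2}$ from Theorem \ref{AB}, the boundedness of $f$ and $A$, and $\|\Omega\|_{L^2}^2\leq C\,E(\phi_1,\psi_1)\leq C\varepsilon_0$, this yields $\|B_\phi\|_{L^\infty}\leq C\varepsilon_0$, well below the target order $\varepsilon_0^{1/4}$.

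The main obstacle is $B_\psi$, since $\omega\sim|\psi_1|^2$ has no Jacobian structure. Rewriting $\Delta B_\psi=\mathrm{div}\,G$ with $G$ obtained from $A\omega$ by swapping components and applying Calder\'on--Zygmund theory to the zero-Dirichlet problem yields $\|\nabla B_\psi\|_{L^q}\leq C\|\omega\|_{L^q}$ for every $1<q<\infty$, after which the two-dimensional Sobolev embedding $W^{1,q}\hookrightarrow L^\infty$ demands $q>2$. This is precisely where $r>4/3$ is used: setting $s=4r/(4+r)>1$ (so that $r=4s/(4-s)$ and $2s>2$), the quadratic nature of $\omega$ gives $\|\omega\|_{L^{2s}}\leq C\|\psi_1\|_{L^{4s}}^2$, and the interpolation
\begin{equation*}
\|\psi_1\|_{L^{4s}}^2\leq\|\psi_1\|_{L^4}\|\psi_1\|_{L^{4s/(2-s)}},
\end{equation*}
valid for every $s>1$ at interpolation parameter $\theta=\tfrac12$, combined with $\|\psi_1\|_{L^4}\leq\varepsilon_0^{p/4}$ from \eqref{small energy}, yields $\|B_\psi\|_{L^\infty}\leq C\bigl(\|\psi_1\|_{L^{4s/(2-s)}}\bigr)\varepsilon_0^{p/4}\leq C\varepsilon_0^{1/4}$ since $p\geq1$. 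Adding the two bounds produces $\|B\|_{L^\infty}\leq C\varepsilon_0^{1/4}$, with the constant depending on $N$, $\|\psi_1\|_{L^{4s/(2-s)}}$ and (via Sobolev) $\|\nabla\psi_1\|_{L^r}$, as in the statement.

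The delicate coupling is that the interpolation parameter $\theta=\tfrac12$ must simultaneously yield $q=2s>2$ (for Sobolev to reach $L^\infty$) and feed the small norm $\|\psi_1\|_{L^4}$ with its first power (to produce exactly $\varepsilon_0^{1/4}$); both constraints are compatible precisely when $s>1$, i.e., $r>4/3$. At the critical value $r=4/3$ one has $s=1$ and $q=2$, the Sobolev chain collapses, and $B_\psi$ can fail to lie in $L^\infty$, in accord with Remark \ref{technique}.
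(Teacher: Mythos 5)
Your proof is correct, and at the top level it matches the paper's strategy exactly: take the curl of $\nabla A - A\Omega = \nabla^\perp B$ to obtain a zero-Dirichlet Poisson problem for $B$, split the source along $\Omega = f(\phi_1)\nabla\phi_1 + \omega$, and bound the Jacobian piece via Wente's lemma. Where you genuinely diverge is in the spinor piece. The paper expands $\operatorname{curl}(A\omega)$ by the Leibniz rule, producing terms of order $\nabla A\cdot\omega$ and $A\cdot\operatorname{curl}(\omega)\sim\psi_1\nabla\psi_1$, bounds these in $L^s$ (supplemented by an $L^2$ energy estimate to control lower-order norms), and then invokes the second-order Calder\'on--Zygmund estimate $W^{2,s}\hookrightarrow C^0$. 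You instead never differentiate $\omega$: keeping $-\operatorname{curl}(A\omega)=\operatorname{div}((A\omega)^\perp)$ in divergence form lets you apply the first-order $W^{1,q}_0$ estimate $\|\nabla B_\psi\|_{L^q}\leq C\|A\omega\|_{L^q}$ with $q=2s>2$, so that only $\|\omega\|_{L^{2s}}\lesssim\|\psi_1\|_{L^{4s}}^2$ is needed, which you close with the interpolation $\|\psi_1\|_{L^{4s}}^2\leq\|\psi_1\|_{L^4}\|\psi_1\|_{L^{4s/(2-s)}}$ and Morrey's embedding $W^{1,2s}_0\hookrightarrow L^\infty$. The trade-off: the paper's route needs both $\|\psi_1\|_{L^{4s/(2-s)}}$ and $\|\nabla\psi_1\|_{L^{4s/(4-s)}}$ in the constant, whereas yours needs only the former (your remark that $\|\nabla\psi_1\|_{L^r}$ enters ``via Sobolev'' is just one way to bound that single $L$-norm and is not intrinsic to the estimate), so you obtain a marginally cleaner dependence at the cost of invoking Agmon--Douglis--Nirenberg/CZ with $q\neq 2$ rather than an energy estimate plus Wente-type tools. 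Both proofs tacitly use $1<s<2$ (so that $4s/(2-s)$ is finite) and both degenerate at $s=1$, i.e.\ $r=4/3$, exactly as you note. One small nit: your Wente bound for $B_\phi$ should read $\|B_\phi\|_{L^\infty}\leq C\|\nabla(Af(\phi_1))\|_{L^2}\|\nabla\phi_1\|_{L^2}$, where $\nabla A$ picks up a contribution from $\nabla P$ via $A=(\hat A+\mathrm{Id})P^T$; this is still $O(\sqrt{\varepsilon_0})$ by Theorem~\ref{xip}, so your final $O(\varepsilon_0)$ bound stands, but the chain of estimates deserves that one extra line.
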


\begin{proof}
Taking the curl on both sides of equation \eqref{nabla omega} yields
\begin{equation}
\begin{split}
    \Delta B^i_j&=-{\rm curl}(A^i_k\Omega^k_j)\\
    &=-{\rm curl}(A^i_k(f^k_{jl}\nabla\phi_1^l+\omega^k_j))\\
    &=-\nabla^\perp(A^i_kf^k_{jl})\cdot\nabla\phi_1^l-{\rm curl}(A^i_k\omega^k_j).
    \end{split}
\end{equation}
Now, we let $F^i_j,G^i_j\in W^{1,2}_0(B_1)$ be the solutions of
\begin{equation}
    \begin{split}
        \Delta F^i_j&=-\nabla^\perp(A^i_kf^k_{jl})\cdot\nabla\phi^l_1\\
        \Delta G^i_j&=-{\rm curl}(A^i_k\omega^k_j),
    \end{split}
\end{equation}
respectively. By Wente's lemma, we have
\begin{equation}
    \|F^i_j\|_{L^\infty(B_1)}\leq C\|\nabla \phi_1\|_{L^2(B_1)}.
\end{equation}
The assumption $\psi_1\in W^{1,\frac{4s}{4-s}}$ and the Sobolev embedding theorem implies
\begin{equation*}
    \psi_1\in L^{\frac{4s}{4-3s}}(B_1).
\end{equation*}
Since $s>1$, we know
\begin{equation*}
    \frac{4s}{4-s}>4/3, \ \frac{4s}{4-3s}>\frac{4s}{2-s}>4.
\end{equation*}
One can compute $\|{\rm curl}(A^i_k\omega^k_j)\|_{L^s}$ as follows.
\begin{equation*}
\begin{split}
   & \left(\int_{B_1}|{\rm curl}(A^i_k\omega^k_j)|^s\right)^{1/s}\\
   &\leq C \left(\int_{B_1}|\nabla^\perp{A^i_k}\cdot\omega^k_j|^s\right)^{1/s}+C \left(\int_{B_1}|A^i_k{\rm curl}(\omega^k_j)|^s\right)^{1/s}\\
    &\leq C(N)\left(\int_{B_1}|\nabla{A^i_k}|^s|\psi_1|^{2s}\right)^{1/s}
    +C(N)\left(\int_{B_1}|\psi_1|^s|\nabla{\psi_1}|^s\right)^{1/s}.
\end{split}
\end{equation*}
Plugging the following estimates into the inequality above,
\begin{equation*}
\begin{split}
        \left(\int_{B_1}|\nabla{A^i_k}|^s|\psi_1|^{2s}\right)^{1/s}&\leq
            \left(\int_{B_1}|\nabla{A^i_k}|^2\right)^{1/2}    \left(\int_{B_1}(|\psi_1|^{2s})^{\frac{2}{2-s}}\right)^{\frac{1}{s}\frac{2-s}{2}}\\
            &=\|\nabla{A^i_k}\|_{L^2(B_1)}\|\psi_1\|^2_{L^{\frac{4s}{2-s}}(B_1)},
\end{split}
\end{equation*}
\begin{equation*}
\begin{split}
        \left(\int_{B_1}|\psi_1|^s|\nabla{\psi_1}|^s\right)^{1/s}&\leq
            \left(\int_{B_1}|\psi_1|^4\right)^{1/4}    \left(\int_{B_1}(|\nabla\psi_1|^s)^{\frac{4}{4-s}}\right)^{\frac{1}{s}\frac{4-s}{4}}\\
            &=\|\psi_1\|_{L^4(B_1)}\|\nabla\psi_1\|_{L^{\frac{4s}{4-s}}(B_1)},
\end{split}
\end{equation*}
we have
\begin{equation}\label{rhs L^s}
\begin{split}
    &\left(\int_{B_1}|{\rm curl}(A^i_k\omega^k_j)|^s\right)^{1/s}\\
    &\leq C(N)\|\psi_1\|^2_{L^{\frac{4s}{2-s}}(B_1)}\sqrt{\varepsilon_0}+C(N)\|\nabla\psi_1\|_{L^{\frac{4s}{4-s}}(B_1)}\varepsilon_0^{1/4}\\
    &\leq C\left(N,\|\psi_1\|^2_{L^{\frac{4s}{2-s}}(B_1)},\|\nabla\psi_1\|_{L^{\frac{4s}{4-s}}(B_1)}\right)\varepsilon_0^{1/4}.
    \end{split}
\end{equation}

Now, multiplying $G^i_j$ to $\Delta G^i_j$ and using the divergence theorem, we have
  \begin{equation*}
          \int_{B_1}|\nabla G^i_j|^2=-\int_{B_1}G^i_j\Delta G^i_j
          \leq C(N)\int_{B_1}|\nabla \psi_1||\psi_1||G^i_j|+|\psi_1|^2|\nabla A||G^i_j|.
  \end{equation*}
Plugging the following estimates into the inequality above,
\begin{equation*}
\begin{split}
    \int_{B_1}|\nabla \psi_1||\psi_1||G^i_j|&\leq \|\psi_1\|_{L^4(B_1)}\||\nabla{\psi_1}||G^i_j|\|_{L^{4/3}(B_1)}\\
    &\leq\|\psi_1\|_{L^4(B_1)}\|\nabla{\psi_1}\|_{L^{\frac{4s}{4-s}}(B_1)}\|G^i_j\|_{L^{\frac{s}{s-1}}(B_1)}
    \end{split}
\end{equation*}
\begin{equation*}
\begin{split}
    \int_{B_1}|\psi_1|^2|\nabla A||G^i_j|&\leq \|\nabla{A}\|_{L^2(B_1)}\||\psi_1|^2|G^i_j|\|_{L^2(B_1)}\\
    &\leq\|\nabla{A}\|_{L^2(B_1)}\|\psi_1\|_{L^{\frac{4s}{2-s}}(B_1)}^2\|G^i_j\|_{L^{\frac{s}{s-1}}(B_1)},
    \end{split}
\end{equation*}
we have
\begin{equation*}
\begin{split}
          \int_{B_1}|\nabla G^i_j|^2&\leq
          C(N)\left(\|\nabla\psi_1\|_{L^{\frac{4s}{4-s}}(B_1)}\varepsilon_0^{1/4}+\|\psi_1\|^2_{L^{\frac{4s}{2-s}}(B_1)}\sqrt{\varepsilon_0}\right)\|G^i_j\|_{L^{\frac{s}{s-1}}(B_1)}\\
    &\leq C\left(N,\|\psi_1\|^2_{L^{\frac{4s}{2-s}}(B_1)},\|\nabla\psi_1\|_{L^{\frac{4s}{4-s}}(B_1)}\right)\varepsilon_0^{1/4}\|\nabla{G^i_j}\|_{L^2(B_1)},
        \end{split}
  \end{equation*}
where we have used the Poincar{\'e} inequality in the last inequality. The inequality above implies that
  \begin{equation*}
      \|\nabla G^i_j\|_{L^2(B_1)}\leq C\left(N,\|\psi_1\|^2_{L^{\frac{4s}{2-s}}(B_1)},\|\nabla\psi_1\|_{L^{\frac{4s}{4-s}}(B_1)}\right)\varepsilon_0^{1/4}.
  \end{equation*}
  Again, by Poincar{\'e} inequality, we obtain
  \begin{equation}
      \|G^i_j\|_{L^p(B_1)}\leq C\left(N,\|\psi_1\|^2_{L^{\frac{4s}{2-s}}(B_1)},\|\nabla\psi_1\|_{L^{\frac{4s}{4-s}}(B_1)}\right)\varepsilon_0^{1/4},  \ \forall p>1.
  \end{equation}
  Together with \eqref{rhs L^s}, we know
  \begin{equation}
  \|G^i_j\|_{L^\infty(B_1)}\leq C\|G^i_j\|_{W^{2,s}(B_1)}\leq C\left(N,\|\psi_1\|^2_{L^{\frac{4s}{2-s}}(B_1)},\|\nabla\psi_1\|_{L^{\frac{4s}{4-s}}(B_1)}\right)\varepsilon_0^{1/4}.
  \end{equation}
  Hence, the inequality \eqref{B} follows from $B^i_j=F^i_j+G^i_j$ and we complete the proof.

\end{proof}

With such $L^\infty$ norm of $B$ in hands, we can prove the following corollary.
\begin{cor}\label{jacobian}
Under the assumptions of Theorem \ref{energy convexity}, there exists
\begin{equation}
    a\in W^{1,2}(B_1, M_K(\mathbb{R})), \  b\in W^{1,2}(B_1,\mathbb{R}^K)
\end{equation}
such that
\begin{equation}
    \Delta\phi_1=\nabla a\cdot\nabla^\perp b \ \text{in} \ B_1
\end{equation}
and
\begin{equation}
    \|\nabla{a}\|_{L^2(B_1)}+\|\nabla{b}\|_{L^2(B_1)}\leq C\|\nabla\phi_1\|_{L^2(B_1)}.
\end{equation}
In particular, $\phi$ is continuous in $B_1$.
\end{cor}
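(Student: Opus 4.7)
The starting point must be the conservation law \eqref{conservation law}, which on a simply connected domain $B_1$ gives an exact dual. I would first invoke the Poincar\'e lemma for divergence-free fields to produce $D\in W^{1,2}(B_1,\mathbb{R}^K)$ with
\begin{equation*}
A\nabla \phi_1+B\nabla^\perp \phi_1=\nabla^\perp D,
\end{equation*}
normalized so that $\|\nabla D\|_{L^2(B_1)}\leq (\|A\|_{L^\infty}+\|B\|_{L^\infty})\|\nabla \phi_1\|_{L^2(B_1)}\leq C\|\nabla \phi_1\|_{L^2(B_1)}$, where the $L^\infty$ bound on $A$ is immediate from $A=(\hat A+\mathrm{Id})P^T$ in Theorem \ref{AB} and the $L^\infty$ bound on $B$ is the content of Proposition \ref{L infty B}.

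Next I would invert $A$. Because $\hat A$ is small in $L^\infty$ once $\varepsilon_0$ is small enough (Theorem \ref{AB}) and $P\in SO(K)$, the matrix $A$ is pointwise invertible with $A^{-1}\in W^{1,2}\cap L^\infty(B_1,\mathrm{GL}_K(\mathbb{R}))$, and the chain/product rule gives the $W^{1,2}$ estimates $\|\nabla A^{-1}\|_{L^2}\leq C\|\nabla A\|_{L^2}\leq C\|\Omega\|_{L^2}\leq C\|\nabla \phi_1\|_{L^2}$ (absorbing the $\psi_1$ contribution into the constant via the small energy hypothesis on $\psi_1$), and similarly for $A^{-1}B$. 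Multiplying the identity above by $A^{-1}$ yields
\begin{equation*}
\nabla \phi_1= A^{-1}\nabla^\perp D - A^{-1}B\,\nabla^\perp \phi_1.
\end{equation*}

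Taking divergence and using $\dive(\nabla^\perp f)=0$ for every scalar $f$ kills the second derivative contributions and leaves
\begin{equation*}
\Delta \phi_1 = \nabla(A^{-1})\cdot \nabla^\perp D \;-\; \nabla(A^{-1}B)\cdot \nabla^\perp \phi_1,
\end{equation*}
which is precisely a sum of Jacobian-type products $\nabla a\cdot \nabla^\perp b$: set $a$ to be the $W^{1,2}$ matrix obtained by concatenating $A^{-1}$ and $-A^{-1}B$, and $b$ the $W^{1,2}$ vector obtained by stacking $D$ and $\phi_1$. The $W^{1,2}$ norms of $a$ and $b$ are controlled by $C\|\nabla \phi_1\|_{L^2(B_1)}$ by the bounds collected above.

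Finally, continuity of $\phi_1$ follows term-by-term from Wente's Lemma \ref{wente lemma}: each Jacobian $\nabla(\cdot)\cdot\nabla^\perp(\cdot)$ with both factors in $W^{1,2}$ produces a solution in $C^0\cap W^{1,2}$, and $\phi_1$ agrees with such a Wente potential up to a harmonic remainder determined by its boundary trace, which is continuous on $B_1$ by standard elliptic regularity. The only real obstacle is the algebraic manipulation of verifying that $A$ is uniformly invertible with $W^{1,2}\cap L^\infty$ inverse and that the product $A^{-1}B$ inherits the correct Sobolev bound; both reduce to the smallness of $\hat A$ in $L^\infty$ given by Theorem \ref{AB} together with the $L^\infty$ smallness of $B$ from Proposition \ref{L infty B}. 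All other steps are conservative manipulations of the conservation law.
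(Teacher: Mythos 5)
Your proof is correct and follows essentially the same route as the paper's: apply the Poincar\'e lemma to the conservation law \eqref{conservation law} to produce the dual potential ($\eta$ in the paper, your $D$), multiply by $A^{-1}$, take the divergence to obtain a sum of Jacobians, and invoke Wente's Lemma for continuity. The only cosmetic differences are that you make explicit the invertibility of $A$ via the $L^\infty$ smallness of $\hat A$ and the bookkeeping of $a$ and $b$ as concatenated blocks, both of which the paper leaves implicit.
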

Although the proof is same as the one in \cite{lamm2013Estimates}, we still write them down for completeness.
\begin{proof}
Equation \eqref{conservation law} and the $L^\infty$ estimates of $A$ and $B$ imply the existence of $\eta\in W^{1,2}(B_1,\mathbb{R}^K)$ such that
\begin{equation*}
    \|\nabla \eta\|_{L^2(B_1)}\leq C\|\nabla \phi_1\|_{L^2(B_1)}
\end{equation*}
and
\begin{equation}\label{perp eta}
    \nabla^\perp\eta=A\nabla \phi_1+B\nabla^\perp\phi_1.
\end{equation}
Multiplying this equation with $A^{-1}$ and taking the divergence yields
\begin{equation*}
    \Delta \phi_1^l=\nabla(A^{-1})^l_k\cdot\nabla^\perp\eta^k-\nabla(A^{-1}B)^l_k\cdot\nabla^\perp\phi_1^k.
\end{equation*}
The continuity of $\phi_1$ follows from the Wente's Lemma.
\end{proof}

\vspace{2em}

\section{Local estimate for the matrix $P$}
In this section, we will show a local estimate for the matrix $P$ by considering the equation for $\Delta P$. Different from the harmonic map case, $\Delta P$ does not have the Jacobian structure. There is a divergence term coursed by the spinor term in $\Omega$.

\begin{lem}\label{eqn for P}
Under the assumptions of Theorem \ref{energy convexity}, there exists
\begin{equation*}
    \xi\in W^{1,2}_0(B_1,so(K)), \ \eta\in W^{1,2}(B_1,\mathbb{R}^K)
\end{equation*}
and
\begin{equation*}
    Q_k, R_k\in W^{1,2}(B_1,{\rm GL}_{K}(\mathbb{R})), \ k=1,\dots,K
\end{equation*}
with
\begin{equation*}
    \|\nabla\xi\|_{L^2(B_1)}+\|\nabla\eta\|_{L^2(B_1)}\leq C\|\nabla\phi_1\|_{L^2(B_1)}
\end{equation*}
and
\begin{equation*}
    \sum_k(\|\nabla Q_k\|_{L^2(B_1)}+\|\nabla R_k\|_{L^2(B_1)})\leq C
\end{equation*}
such that
\begin{equation}
    \Delta P=\nabla{P}\cdot\nabla^\perp\xi+\nabla{Q_k}\cdot\nabla^\perp\eta^k+\nabla{R_k}\cdot\nabla^\perp\phi_1^k-\dive(Z),
\end{equation}
where $Z=\left(Z^i_j\right)_{i,j=1,\dots,K}=\left(\omega^i_mP^m_j\right)_{i,j=1,\dots,K}$.
\end{lem}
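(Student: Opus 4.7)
The plan is to differentiate the gauge identity \eqref{xi} and use the conservation law supplied by Corollary \ref{jacobian} to eliminate any second derivatives of $\phi_1$ that would otherwise appear.

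First I would rewrite \eqref{xi} as $\nabla P = P\nabla^\perp \xi - \Omega P$ and take the divergence entry-wise, obtaining
\begin{equation*}
\Delta P = \nabla P \cdot \nabla^\perp \xi + P\,\dive(\nabla^\perp\xi) - \dive(\Omega P) = \nabla P \cdot \nabla^\perp \xi - \dive(\Omega P),
\end{equation*}
since $\dive(\nabla^\perp \xi^i_j) = 0$. The $\xi$ estimate in Theorem \ref{xip} already controls the first term in the desired norm.

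Next I would split $\Omega = f\nabla\phi_1 + \omega$ as in \eqref{rewrite map eq}. The spinor piece $\omega P$ is, by definition, the matrix $Z$, so it contributes the $-\dive(Z)$ term on the right. For the remaining piece $(f\nabla\phi_1)P$, a direct expansion would produce $\Delta\phi_1$, which is merely in $L^1$; this is the main obstacle. To bypass it, I would invoke \eqref{perp eta} from Corollary \ref{jacobian}, solving for the gradient as
\begin{equation*}
\nabla\phi_1^l = (A^{-1})^l_k\,\nabla^\perp\eta^k - (A^{-1}B)^l_k\,\nabla^\perp\phi_1^k,
\end{equation*}
so that each entry of $(f\nabla\phi_1)P$ is a linear combination of $(\tilde Q_k)^i_j\,\nabla^\perp\eta^k$ and $(\tilde R_k)^i_j\,\nabla^\perp\phi_1^k$, with coefficients
\begin{equation*}
(\tilde Q_k)^i_j = f^i_{ml}(\phi_1)\,(A^{-1})^l_k\,P^m_j, \qquad (\tilde R_k)^i_j = -f^i_{ml}(\phi_1)\,(A^{-1}B)^l_k\,P^m_j.
\end{equation*}
Taking divergence and using $\dive(\nabla^\perp(\cdot)) = 0$ leaves exactly the Jacobian-type terms $\nabla \tilde Q_k \cdot \nabla^\perp\eta^k + \nabla \tilde R_k \cdot \nabla^\perp\phi_1^k$, and setting $Q_k = -\tilde Q_k$, $R_k = -\tilde R_k$ gives the claimed identity.

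Finally, for the $W^{1,2}$ bounds on $Q_k$ and $R_k$, I would combine the $L^\infty\cap W^{1,2}$ control of $A,A^{-1},B$ from Theorem \ref{AB} and Proposition \ref{L infty B}, the fact that $P\in SO(K)\cap W^{1,2}$ is bounded pointwise, and the continuity together with $W^{1,2}$ regularity of $\phi_1$ (Corollary \ref{jacobian}) which makes $f(\phi_1)$ lie in $L^\infty\cap W^{1,2}$. Then each $\tilde Q_k$, $\tilde R_k$ is a product of four $L^\infty\cap W^{1,2}$ factors, so the Leibniz rule gives a uniform $W^{1,2}$ estimate independent of $\varepsilon_0$. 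The bounds $\|\nabla\xi\|_{L^2}, \|\nabla\eta\|_{L^2}\leq C\|\nabla\phi_1\|_{L^2}$ are immediate from Theorem \ref{xip} and Corollary \ref{jacobian}, which completes the proof.
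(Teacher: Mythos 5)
Your proposal is correct and follows essentially the same route as the paper: multiply \eqref{xi} by $P$ to get $\nabla P = P\nabla^\perp\xi - \Omega P$, take the divergence, split $\Omega = f\nabla\phi_1 + \omega$, and substitute $\nabla\phi_1 = A^{-1}\nabla^\perp\eta - A^{-1}B\nabla^\perp\phi_1$ from \eqref{perp eta} to turn the $f\nabla\phi_1$-piece into Jacobian terms, with the $\omega$-piece left as $-\dive(Z)$. Your sign bookkeeping matches the paper's $Q_k$, $R_k$, and your closing remark about the $W^{1,2}$ bounds (products of $L^\infty\cap W^{1,2}$ factors, using Theorem \ref{xip}, Theorem \ref{AB}, Proposition \ref{L infty B} and Corollary \ref{jacobian}) supplies a justification the paper leaves implicit.
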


\begin{proof}
Multiplying both sides of equation \eqref{xi} by $P$ from the left gives
\begin{equation*}
    \nabla{P}^i_j=P^i_m\nabla^\perp\xi^m_j-\Omega^i_mP^m_j.
\end{equation*}
Taking the divergence on both sides of the equation above yields
\begin{equation}\label{delta p}
\begin{split}
    \Delta P^i_j&=\nabla P^i_m\nabla^\perp\xi^m_j-\dive(\Omega^i_mP^m_j)\\
    &=\nabla P^i_m\nabla^\perp\xi^m_j-\dive(f^i_{ml}\nabla\phi^l_1P^m_j+\omega^i_mP^m_j).
\end{split}
\end{equation}
Multiplying $A^{-1}$ on the both sides of \eqref{perp eta} from the left implies
\begin{equation}\label{nabla phi}
    \nabla\phi_1=A^{-1}\nabla^\perp\eta-A^{-1}B\nabla^\perp\phi_1.
\end{equation}
Plugging \eqref{nabla phi} into \eqref{delta p}, we get
\begin{equation}
    \begin{split}
    \Delta P^i_j
    &=\nabla P^i_m\nabla^\perp\xi^m_j-\dive(f^i_{ml}((A^{-1})^l_k\nabla^\perp\eta^k-(A^{-1}B)^l_k\nabla^\perp\phi_1^k)P^m_j)\\
    &\quad-\dive(\omega^i_mP^m_j)\\
    &=\nabla P^i_m\nabla^\perp\xi^m_j-\nabla(f^i_{ml}(A^{-1})^l_kP^m_j)\cdot\nabla^\perp\eta^k\\
    &\quad+\nabla(f^i_{ml}(A^{-1}B)^l_kP^m_j)\cdot\nabla^\perp\phi_1^k-\dive(\omega^i_mP^m_j).
\end{split}
\end{equation}
Then we complete the proof by denoting
\begin{equation*}
\begin{split}
    (Q_k)^i_j&=-f^i_{ml}(A^{-1})^l_kP^m_j\\
    (R_k)^i_j&=f^i_{ml}(A^{-1}B)^l_kP^m_j\\
    Z^i_j&=\omega^i_mP^m_j.
    \end{split}
\end{equation*}
\end{proof}

Now, let us prove a local estimate on the oscillation of the matrix $P$ based on Lemma \ref{eqn for P}.
\begin{prop}\label{osc p}
Under the assumptions of Theorem \ref{energy convexity}, for any $x\in B_1$, any $r>0$ such that $B_{2r}(x)\subset B_1$ and any $y\in B_r(x)$, we have
\begin{equation}
    |P(y)-P(x)|\leq C\varepsilon_0^{1/4},
\end{equation}
where $C=C(N,\|\psi_1\|_{W^{1,r}(B_1)})$.
\end{prop}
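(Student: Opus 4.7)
Fix $x \in B_1$ and denote the geometric radius by $\rho$ (to distinguish from the Sobolev exponent $r$); we work on the ball $B_{2\rho}(x) \subset B_1$. The plan is to use the equation of Lemma \ref{eqn for P} to decompose $P$ into a harmonic part plus four inhomogeneous pieces, estimate each piece separately, and recombine. Specifically, let $P_1, P_2, P_3, P_4 \in W^{1,2}_0(B_{2\rho}(x))$ solve
\begin{align*}
\Delta P_1 &= \nabla P \cdot \nabla^\perp \xi, & \Delta P_2 &= \nabla Q_k \cdot \nabla^\perp \eta^k,\\
\Delta P_3 &= \nabla R_k \cdot \nabla^\perp \phi_1^k, & \Delta P_4 &= -\dive(Z),
\end{align*}
and set $h := P - \sum_{i=1}^{4} P_i$, which is harmonic on $B_{2\rho}(x)$.

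\emph{Step 1 (Wente terms.)} The right-hand sides of the equations for $P_1, P_2, P_3$ all have Jacobian structure, so Lemma \ref{wente lemma} combined with the $L^2$-bounds provided by Theorem \ref{xip} ($\|\nabla P\|_{L^2},\|\nabla\xi\|_{L^2} \leq C\varepsilon_0^{1/2}$) and Lemma \ref{eqn for P} ($\|\nabla Q_k\|_{L^2} + \|\nabla R_k\|_{L^2} \leq C$, $\|\nabla\eta\|_{L^2} \leq C\varepsilon_0^{1/2}$) yields
$$\|P_i\|_{L^\infty(B_{2\rho}(x))} + \|\nabla P_i\|_{L^2(B_{2\rho}(x))} \leq C\varepsilon_0^{1/2}, \quad i=1,2,3.$$

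\emph{Step 2 (divergence term, the main obstacle.)} The term $\dive(Z)$ has no Jacobian structure, so Wente's lemma does not apply. Testing $\Delta P_4 = -\dive(Z)$ against $P_4$ and integrating by parts gives $\|\nabla P_4\|_{L^2} \leq \|Z\|_{L^2} \leq C\|\psi_1\|_{L^4}^2 \leq C\varepsilon_0^{p/2} \leq C\varepsilon_0^{1/2}$. For the $L^\infty$-control one must exploit the super-critical hypothesis $r>4/3$: Sobolev embedding yields $\psi_1 \in L^s(B_1)$ for some $s>4$, whence $Z \in L^{s/2}(B_1)$ with $s/2 > 2$. Choosing $q \in (2, s/2)$, Calderón--Zygmund for the Dirichlet problem gives $\|\nabla P_4\|_{L^q(B_{2\rho}(x))} \leq C\|Z\|_{L^q(B_{2\rho}(x))}$, and Morrey's embedding yields
$$\|P_4\|_{L^\infty(B_{2\rho}(x))} \leq C\|Z\|_{L^q(B_{2\rho}(x))} \leq C\|\psi_1\|_{L^{2q}(B_1)}^2.$$
Interpolating $\|\psi_1\|_{L^{2q}} \leq \|\psi_1\|_{L^4}^{\theta}\|\psi_1\|_{L^s}^{1-\theta}$ with $\theta \in (0,1)$ chosen close to $1$ by taking $q$ close to $2$, and combining $\|\psi_1\|_{L^4} \leq \varepsilon_0^{p/4}$ with $\|\psi_1\|_{L^s} \leq C\|\psi_1\|_{W^{1,r}}$, one arranges $p\theta \geq 1/2$, so that
$$\|P_4\|_{L^\infty(B_{2\rho}(x))} \leq C\varepsilon_0^{p\theta/2} \leq C\varepsilon_0^{1/4},$$
with $C$ depending on $N$ and $\|\psi_1\|_{W^{1,r}}$.

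\emph{Step 3 (harmonic part.)} Each component of $\nabla h$ is harmonic on $B_{2\rho}(x)$, so the mean-value inequality gives $|\nabla h(z)| \leq C\rho^{-1}\|\nabla h\|_{L^2(B_{2\rho}(x))}$ for $z \in B_\rho(x)$. Using the preceding $L^2$-gradient bounds and $\|\nabla P\|_{L^2(B_{2\rho})} \leq C\varepsilon_0^{1/2}$ gives $\|\nabla h\|_{L^2(B_{2\rho}(x))} \leq C\varepsilon_0^{1/2}$, hence
$$|h(y) - h(x)| \leq |y-x|\sup_{B_\rho(x)}|\nabla h| \leq \rho \cdot C\rho^{-1}\varepsilon_0^{1/2} = C\varepsilon_0^{1/2}.$$

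\emph{Step 4 (assembly.)} Combining $|P(y)-P(x)| \leq |h(y)-h(x)| + 2\sum_{i=1}^{4}\|P_i\|_{L^\infty(B_{2\rho}(x))}$ with Steps 1--3, and noting that for $\varepsilon_0 \leq 1$ the dominant exponent is $1/4$ (contributed by $P_4$), yields the claimed oscillation bound $|P(y)-P(x)| \leq C\varepsilon_0^{1/4}$.

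The main technical obstacle is Step 2: since the $\dive(Z)$ piece is not of Jacobian form, the only way to obtain an $L^\infty$-bound with a good power of $\varepsilon_0$ is to combine elliptic regularity above the scaling-critical exponent (forcing the hypothesis $r > 4/3$) with the smallness $\|\psi_1\|_{L^4} \leq \varepsilon_0^{p/4}$ by means of an interpolation that is tuned so that $p\theta \geq 1/2$.
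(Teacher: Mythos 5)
Your proof is correct, and for the troublesome $\dive(Z)$ term it takes a genuinely different route from the paper. The paper decomposes $P = P_1 + P_2 + V$ once on the full disk $B_1$: $P_1$ collects the three Jacobian terms and is handled by Wente; $P_2$ solves $\Delta P_2 = -\dive(Z)$ with $P_2 = 0$ on $\partial B_1$, and its $L^\infty$-norm is controlled via the Lorentz-space chain $\|P_2\|_{L^\infty} \lesssim \|\nabla P_2\|_{L^{2,1}} \lesssim \|Z\|_{L^{2,1}} \lesssim \|Z\|_{W^{1,1}}$, using the planar embedding $W^{1,1}\hookrightarrow L^{2,1}$ and the fact that $\nabla u \in L^{2,1}(B_1)$ forces $u\in L^\infty$ (both cited from H\'elein and Li). You instead localize the decomposition to $B_{2\rho}(x)$ and replace the Lorentz-space argument with the more elementary chain: $r>4/3$ gives $\psi_1\in L^s$ with $s>4$ by Sobolev, hence $Z\in L^{s/2}$ with $s/2>2$; Calder\'on--Zygmund then yields $\nabla P_4\in L^q$ for some $q>2$, Morrey gives $L^\infty$, and an $L^4$--$L^s$ interpolation of $\psi_1$ delivers the required $\varepsilon_0^{1/4}$ power. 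A small remark: you do not need to send $q\to 2^+$; since $p\geq 1$ it suffices to take $\theta=1/2$, i.e. $q=4s/(s+4)>2$, which keeps all elliptic constants uniformly bounded. The trade-off between the two approaches is that the paper's Lorentz-space estimate is sharper (it actually uses only $\nabla\psi_1\in L^{4/3}$ for this step, so it would survive at the endpoint $r=4/3$), at the cost of invoking $L^{2,1}$ machinery, whereas your argument is entirely standard $L^p$ elliptic theory but genuinely consumes the slack $r>4/3$. Your concluding remark that elliptic regularity above the critical exponent is "the only way" to get the $L^\infty$-bound is therefore slightly too strong; the Lorentz-space route is an alternative. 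One further minor point: the paper's bound $\|P_1\|_{L^\infty}+\|\nabla P_1\|_{L^2}\leq C\varepsilon_0$ in \eqref{P1} appears to be a typo for $C\sqrt{\varepsilon_0}$ (the $\nabla Q_k\cdot\nabla^\perp\eta^k$ terms only contribute $\sqrt{\varepsilon_0}$), and your Step 1 states the correct power.
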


\begin{proof}
Let $P_1\in W^{1,2}(B_1,M_K(\mathbb{R}))$ be the weak solution of \begin{equation*}
\begin{cases}
    \Delta P_1=\nabla{P}\cdot\nabla^\perp\xi+\nabla{Q_k}\cdot\nabla^\perp\eta^k+\nabla{R_k}\cdot\nabla^\perp\phi_1^k \ &\text{in} \ B_1,\\
    P_1=0 \ &\text{on} \ \partial B_1,
    \end{cases}
\end{equation*}
where $Q_k$ and $R_k$ are defined in Lemma \ref{eqn for P}. Then by Wente's Lemma \ref{wente lemma} we have $P_1\in C^0(B_1, M_K(\mathbb{R}))$ and
\begin{equation}\label{P1}
    \|P_1\|_{L^\infty(B_1)}+\|\nabla{P_1}\|_{L^2(B_1)}\leq C\varepsilon_0.
\end{equation}
Consider the solution $P_2\in W^{1,2}(B_1,M_K(\mathbb{R}))$ to
\begin{equation*}
\begin{cases}
    \Delta P_2=-\dive(Z), \ \text{in} \ B_1,\\
    P_2=0, \ \text{on} \ \partial B_1,
        \end{cases}
\end{equation*}
where $Z$ is defined in Lemma \ref{eqn for P} and $Z\in W^{1,1}(B_1,M_K(\mathbb{R})\otimes\wedge^1\mathbb{R}^2)$. One can check this as follows.
\begin{equation*}
\begin{split}
    \|Z\|_{L^2(B_1)}&\leq C\|\psi_1\|_{L^4}^2\leq C\sqrt{\varepsilon_0},\\
    \|Z\|_{W^{1,1}(B_1)}&\leq C\|\psi_1\|_{L^4}^2+\|\nabla Z\|_{L^1}\\
    &\leq C\|\psi_1\|_{L^4}^2+C\|\psi_1\|_{L^4(B_1)}\|\nabla\psi_1\|_{L^{4/3}(B_1)}\\
    &\quad+\|\psi_1\|_{L^4(B_1)}^2\|\nabla P\|_{L^2(B_1)}.
    \end{split}
\end{equation*}

Now, by \cite[Lemma 2.3 and Lemma 2.4]{li20191energy} and \cite[Theorem 3.3.10]{helein2002Harmonic}, we have
\begin{equation}\label{P2}
\begin{split}
     \|P_2\|_{L^\infty(B_1)}
     &\leq C\|\nabla{P_2}\|_{L^{2,1}(B_1)}\\
     &\leq C\|Z\|_{L^{2,1}(B_1)}\\
     &\leq C\|Z\|_{W^{1,1}(B_1)}\\
     &\leq C(N,\|\nabla\psi_1\|_{L^{4/3}(B_1)})\varepsilon_0^{1/4}\\
     &\leq C(N,\|\psi_1\|_{W^{1,r}(B_1)})\varepsilon_0^{1/4}
\end{split}
\end{equation}
and
\begin{equation*}
    \|\nabla{P_2}\|_{L^2}\leq \|Z\|_{L^2(B_1)}\leq C\sqrt{\varepsilon_0}.
\end{equation*}

Since
\begin{equation*}
    \Delta (P-P_1-P_2)=0 \ \text{in} \ B_1,
\end{equation*}
we know $V=P-P_1-P_2$ is harmonic. Therefore, for any $x\in B_1$, any $r>0$ such that $B_{2r(x)}\subset B_1$ and any $y\in B_r(x)$ we have
\begin{equation}
\begin{split}
    &|V(y)-V(x)|\\
    &\leq Cr\|\nabla V\|_{L^\infty(B_r(x))}\\
    &\leq C\|\nabla V\|_{L^2(B_{2r}(x))}\\
    &\leq C\left(\|\nabla{P}\|_{L^2(B_{2r}(x))}+\|\nabla{P_1}\|_{L^2(B_{2r}(x))}+\|\nabla{P_2}\|_{L^2(B_{2r}(x))}\right)\\
    &\leq C\sqrt{\varepsilon_0}.
    \end{split}
\end{equation}
Combining \eqref{P1} and \eqref{P2}, we get
\begin{equation*}
\begin{split}
    |P(y)-P(x)|&=|P_1(y)+P_2(y)+V(y)-P_1(x)-P_2(x)-V(x)|\\
    &\leq |P_1(y)-P_1(x)|+|P_2(y)-P_2(x)|+|V(y)-V(x)|\\
    &\leq C(N,\|\psi_1\|_{W^{1,r}(B_1)})\varepsilon_0^{1/4}
    \end{split}
\end{equation*}
and complete the proof.
\end{proof}

\vspace{2em}

\section{Energy convexity of Dirac-harmonic maps and its corollary}
In this section, we will use the estimates in the previous two sections to prove the energy convexity (Theorem \ref{energy convexity}) and its direct corollary (Corollary \ref{main thm}).

For harmonic maps into general manifolds, the uniqueness follows from the energy convexity:
\begin{equation}\label{convexity}
    \frac12\int_{B_1}|\nabla{\phi_2}-\nabla{\phi_1}|^2\leq \int_{B_1}|\nabla{\phi_2}|^2-\int_{B_1}|\nabla{\phi_1}|^2.
\end{equation}
One can see from the computation below that the constant $\varepsilon_0$ also depends on the constant $c_0$ in the assumption \eqref{1 dh}. Therefore, to a uniform constant $\varepsilon_0$, one has to consider a subset of $W^{1,2}$ space. The reason was stated in the introduction. The Dirac operator in the action functional gives us a curvature term. To control this bad term, we impose the extra condition \eqref{1 dh} on $\phi_2$, which will be only used in \eqref{only place} below.

Let us start to prove \eqref{convexity}. It suffices to show
\begin{equation*}
    \Phi\geq-\frac12\int_{B_1}|\nabla\phi_2-\nabla\phi_1|^2,
\end{equation*}
where
\begin{equation*}
\begin{split}
    \Phi&:=\int_{B_1}|\nabla\phi_2|^2-\int_{B_1}|\nabla\phi_1|^2-\int_{B_1}|\nabla\phi_2-\nabla\phi_1|^2\\
    &=2\int_{B_1}\langle\nabla\phi_2-\nabla\phi_1,\nabla\phi_1\rangle\\
    &=2\int_{B_1}\langle\phi_2-\phi_1,-\Delta\phi_1\rangle\\
    &=-2\int_{B_1}\langle\phi_2-\phi_1,II(d\phi_1,d\phi_1)+\mathcal{R}(\phi_1,\psi_1)\rangle\\
    &\geq-C(N)\int_{B_1}|\phi_2-\phi_1|^\perp|\nabla\phi_1|^2-C(N)\int_{B_1}|\phi_2-\phi_1||\nabla\phi_1||\psi_1|^2.
    \end{split}
\end{equation*}
By \cite[Lemma A.1]{cm2008Width}, H\"older inequality and our assumption \eqref{1 dh}, we have
\begin{equation}\label{only place}
\begin{split}
\Phi&\geq-C(N)\int_{B_1}|\phi_2-\phi_1|^2|\nabla\phi_1|^2-C(N){\varepsilon_0}^{\frac{p}{2}}\left(\int_{B_1}|\phi_2-\phi_1|^2|\nabla\phi_1|^2\right)^{1/2}\\
&\geq-C(N)(1+{\varepsilon_0}^{\frac{p}{2}}c_0^{-1})\int_{B_1}|\phi_2-\phi_1|^2|\nabla\phi_1|^2.
\end{split}
\end{equation}
Note that this is the only place we use the assumption \eqref{1 dh}. To control the integral in the right-hand side of the inequality above, we need the following lemma.

\begin{lem}\label{reduce to varphi}
Under the assumptions of Theorem \ref{main thm}, if we can find a solution $\varphi\in W^{1,2}_0\cap L^\infty(B_1)$ of
\begin{equation}\label{varphi}
    \begin{cases}
        \Delta \varphi=|\nabla\phi_1|^2, \ &\text{in} \ B_1,\\
        \varphi=0 \ &\text{on} \ \partial B_1,
    \end{cases}
\end{equation}
which satisfies
\begin{equation}\label{bound varphi}
    \|\varphi\|_{L^\infty(B_1)}+\|\nabla\varphi\|_{L^2(B_1)}\leq C\int_{B_1}|\nabla\phi_1|^2\leq C\varepsilon_0,
\end{equation}
then we have
\begin{equation}\label{convexity1}
\begin{split}
    \int_{B_1}|\phi_2-\phi_1|^2|\nabla\phi_1|^2&\leq C\int_{B_1}|\nabla \phi_1|^2\int_{B_1}|\nabla\phi_2-\nabla\phi_1|^2\\
    &\leq C\varepsilon_0\int_{B_1}|\nabla\phi_2-\nabla\phi_1|^2.
    \end{split}
\end{equation}
\end{lem}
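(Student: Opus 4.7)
The plan is to use the auxiliary function $\varphi$ to transfer the weight $|\nabla\phi_1|^2$ onto the test function via integration by parts, and then to close the argument by exploiting the $L^\infty$ bound on $\varphi$ through a bootstrap with a quadratic inequality. Write $u:=\phi_2-\phi_1\in W^{1,2}_0(B_1,\mathbb{R}^K)$ (which vanishes on $\partial B_1$ by hypothesis) and set
\begin{equation*}
I:=\int_{B_1}|u|^2|\nabla\phi_1|^2,\qquad K:=\int_{B_1}|\nabla u|^2,\qquad J:=\int_{B_1}|u|^2|\nabla\varphi|^2.
\end{equation*}
Using $\Delta\varphi=|\nabla\phi_1|^2$ and integrating by parts (the boundary term vanishes because $|u|^2=0$ on $\partial B_1$), I obtain
\begin{equation*}
I=\int_{B_1}|u|^2\Delta\varphi=-\int_{B_1}\nabla|u|^2\cdot\nabla\varphi=-2\int_{B_1}(u\cdot\nabla u)\cdot\nabla\varphi,
\end{equation*}
and Cauchy--Schwarz yields $I\le 2J^{1/2}K^{1/2}$.

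The next step is to bound $J$. For this I exploit the pointwise identity $|\nabla\varphi|^2=\dive(\varphi\nabla\varphi)-\varphi\,\Delta\varphi$. Substituting and integrating the divergence by parts (the boundary term again vanishes since $|u|^2=0$ on $\partial B_1$) gives
\begin{equation*}
J=-2\int_{B_1}\varphi\,(u\cdot\nabla u)\cdot\nabla\varphi-\int_{B_1}|u|^2\varphi\,|\nabla\phi_1|^2.
\end{equation*}
Using $\|\varphi\|_{L^\infty}\le C\varepsilon_0$, Cauchy--Schwarz on the first integral, and Young's inequality to absorb a $J$ term, one arrives at the coupled estimate
\begin{equation*}
J\le C\varepsilon_0^2\,K+C\varepsilon_0\,I.
\end{equation*}

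Finally, substituting the bound on $J$ back into $I^2\le 4JK$ produces the quadratic inequality $I^2\le C\varepsilon_0\,IK+C\varepsilon_0^2K^2$ in $I$, which upon solving gives $I\le C'\varepsilon_0 K$, and hence the desired conclusion (the first inequality of the lemma follows with $E(\phi_1)$ in place of $\varepsilon_0$ by tracking constants).

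The main obstacle to watch out for is that the naive second integration by parts on $I$ (writing $I=\int\varphi\,\Delta|u|^2$) produces a term $\int\varphi\,u\cdot\Delta u$ containing $\Delta u=\Delta\phi_2-\Delta\phi_1$, and no PDE is available for $\phi_2$ (it is only an arbitrary $W^{1,2}$ map subject to \eqref{1 dh}). The strategy above deliberately sidesteps $\Delta u$ by re-expressing $|\nabla\varphi|^2$ through the divergence identity, so that the only second-order derivative that ever appears is $\Delta\varphi=|\nabla\phi_1|^2$, which is fully controlled; the coupling between the two weighted $L^2$ quantities $I$ and $J$ is then disentangled by the quadratic closing.
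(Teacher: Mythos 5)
Your proof is correct and follows essentially the same route as the paper's: the same quantities $I,J,K$, the same first integration by parts giving $I\le 2(JK)^{1/2}$, and the same second integration by parts (your divergence identity for $|\nabla\varphi|^2$ is exactly what the paper's Stokes computation on $\dive(|\phi_2-\phi_1|^2\varphi\nabla\varphi)$ produces). The only cosmetic difference is the final bookkeeping: the paper feeds the bound $I\le 2(JK)^{1/2}$ back into the $J$-estimate to get $J^{1/2}\le 4\|\varphi\|_{L^\infty}K^{1/2}$ directly, while you apply Young's inequality and close via a quadratic in $I$; both are equivalent and yield the same conclusion.
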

The proof is same as the one in \cite{colding2007Width} and \cite{lamm2013Estimates}. We just write it down for completeness.
\begin{proof}[Proof of Lemma \ref{reduce to varphi}]
Substituting \eqref{varphi} into the left-hand side of \eqref{convexity1} yields
\begin{equation}\label{stokes}
\begin{split}
    &\int_{B_1}|\phi_2-\phi_1|^2|\nabla\phi_1|^2\\
    &=\int_{B_1}|\phi_2-\phi_1|^2\Delta\varphi\\
    &\leq \int_{B_1}|\nabla|\phi_2-\phi_1|^2||\nabla\varphi|  \\
    &\leq 2\left(\int_{B_1}|\nabla\phi_2-\nabla\phi_1|^2\right)^{1/2}\left(\int_{B_1}|\phi_2-\phi_1|^2|\nabla\varphi|^2\right)^{1/2},
\end{split}
\end{equation}
where we have used Stokes' theorem and the Cauchy-Schwarz inequality. Now applying Stokes' theorem to $\dive(|\phi_2-\phi_1|^2\varphi\nabla\varphi)$ and using $\Delta \varphi\geq0$ and \eqref{stokes},  we have
\begin{equation*}
\begin{split}
    &\int_{B_1}|\phi_2-\phi_1|^2|\nabla\varphi|^2\\
    &\leq \int_{B_1}|\varphi|(|\phi_2-\phi_1|^2\Delta\varphi+|\nabla|\phi_2-\phi_1|^2||\nabla\varphi|)\\
    &\leq 4\|\varphi\|_{L^\infty(B_1)}\left(\int_{B_1}|\nabla\phi_2-\nabla\phi_1|^2\right)^{1/2}\left(\int_{B_1}|\phi_2-\phi_1|^2|\nabla\varphi|^2\right)^{1/2}.
\end{split}
\end{equation*}
Therefore,
\begin{equation}\label{nabla varphi}
\left(\int_{B_1}|\phi_2-\phi_1|^2|\nabla\varphi|^2\right)^{1/2}\leq4\|\varphi\|_{L^\infty(B_1)}\left(\int_{B_1}|\nabla\phi_2-\nabla\phi_1|^2\right)^{1/2}.
\end{equation}
Finally, substituting \eqref{nabla varphi} back into \eqref{stokes} and combining with \eqref{bound varphi} yields
\begin{equation}
\begin{split}
    \int_{B_1}|\phi_2-\phi_1|^2|\nabla\phi_1|^2
    &\leq C\|\varphi\|_{L^\infty(B_1)}\int_{B_1}|\nabla\phi_2-\nabla\phi_1|^2\\
    &\leq C\int_{B_1}|\nabla\phi_1|^2\int_{B_1}|\nabla\phi_2-\nabla\phi_1|^2
\end{split}
\end{equation}
\end{proof}

By Lemma \ref{reduce to varphi}, we have
 \begin{equation}\label{spinor p}
\Phi\geq -C(N)\left(1+{\varepsilon_0}^{\frac{p}{2}}c_0^{-1}\right)\varepsilon_0\int_{B_1}|\nabla\phi_2-\nabla\phi_1|^2.
\end{equation}
Now, we first choose $\varepsilon_0$ such that
\begin{equation}\label{epsilon0}
  C(N)\varepsilon_0\leq\frac14.
\end{equation}
Then, for such $\varepsilon_0$, there exists $p\geq1$ such that
\begin{equation}\label{p}
  c_0= 4C(N)\varepsilon_0^{1+\frac{p}{2}}\leq\varepsilon_0^{\frac{p}{2}}.
\end{equation}

Combing \eqref{spinor p}, \eqref{epsilon0}, \eqref{p} and our assumption \eqref{small energy}, we get the following convexity
\begin{equation*}
    \Phi\geq-\frac12\int_{B_1}|\nabla\phi_2-\nabla\phi_1|^2,
\end{equation*}
and Theorem \ref{energy convexity} and Corollary \ref{main thm} follow under the assumptions in Lemma \ref{reduce to varphi}.

Now, everything boils down to validating the assumptions in Lemma \ref{reduce to varphi}, i.e., the existence of a function $\varphi$ satisfying \eqref{varphi} and \eqref{bound varphi}. In the light of the following regularity theorem (see \cite[Theorem 1.100]{Semmes1994Apoh}, \cite[Theorem 5.1]{chang1993Hp} and \cite[Theorem A.4 ]{lamm2013Estimates}), it will be sufficient to show that $|\nabla\phi_1|^2$ is in the local Hardy space $h^1(B_1)$.

\begin{thm}\cite[Theorem A.4 ]{lamm2013Estimates}
Let $f\in h^1(B_1)$ and assume that $f\geq0$ a.e. in $B_1$. Then there exists a function $v\in L^\infty\cap W^{1,2}(B_1)$ solving the Dirichlet problem
\begin{equation}\label{v}
    \begin{cases}
        \Delta v=f, \ &\text{in} \ B_1,\\
        v=0 \ &\text{on} \ \partial B_1,
    \end{cases}
\end{equation}
Moreover, there exists a constant $C>0$ such that
\begin{equation}
\|v\|_{L^\infty(B_1)}+\|\nabla{v}\|_{L^2(B_1)}\leq C \|f\|_{h^1(B_1)}.
\end{equation}
\end{thm}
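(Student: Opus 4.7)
\medskip

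\textbf{Proof plan.} My plan is to represent the solution via the Dirichlet Green's function and then exploit the $h^1$--$bmo$ duality to get the $L^\infty$ bound, with the $W^{1,2}$ bound following by a standard energy estimate.

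\emph{Step 1 (Green's function representation).} Let $G(x,y)$ be the Dirichlet Green's function of $-\Delta$ on $B_1$, which in two dimensions admits the explicit formula
\[
G(x,y)=\tfrac{1}{2\pi}\log\!\Bigl(\tfrac{|x-y^{*}|\,|y|}{|x-y|}\Bigr),\qquad y^{*}=y/|y|^{2}.
\]
Since $h^{1}(B_{1})\hookrightarrow L^{1}(B_{1})$, the function
\[
v(x):=\int_{B_{1}} G(x,y)\,f(y)\,dy
\]
is well-defined a.e., solves $\Delta v = f$ distributionally on $B_{1}$, and vanishes on $\partial B_{1}$. The assumption $f\geq 0$ additionally guarantees $v\leq 0$ and selects this as the distinguished solution via the maximum principle.

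\emph{Step 2 ($L^{\infty}$ bound by duality).} I would use that $bmo(B_{1})$ is the dual of $h^{1}(B_{1})$, so that
\[
|v(x)|\ \le\ C\,\|G(x,\cdot)\|_{bmo(B_{1})}\,\|f\|_{h^{1}(B_{1})}.
\]
The goal is to show $\|G(x,\cdot)\|_{bmo(B_{1})}\le C$ uniformly in $x\in B_{1}$. The singular piece $\tfrac{1}{2\pi}\log(1/|x-y|)$ is a classical $BMO$ function with norm independent of $x$. The regular piece $\tfrac{1}{2\pi}\log(|x-y^{*}|\,|y|)$ is smooth away from $\{y=0\}\cup\partial B_{1}$, and a direct computation using that $|x-y^{*}|\ge 1-|x|\cdot 1/|y|$ is bounded below uniformly when $y\to\partial B_{1}$ (combined with $y^*$--$x$ being reflective across the circle) shows its $bmo$-norm is controlled independently of $x$. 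Summing the two contributions yields the uniform $bmo$-bound and hence $\|v\|_{L^{\infty}(B_{1})}\le C\|f\|_{h^{1}(B_{1})}$.

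\emph{Step 3 (Energy estimate).} With $v\in L^{\infty}(B_{1})$ and $v=0$ on $\partial B_{1}$, the standard energy identity gives
\[
\int_{B_{1}}|\nabla v|^{2}\;=\;-\int_{B_{1}} v\,f\;\le\;\|v\|_{L^{\infty}(B_{1})}\,\|f\|_{L^{1}(B_{1})}\;\le\;C\,\|f\|_{h^{1}(B_{1})}^{2},
\]
using $\|f\|_{L^{1}}\le\|f\|_{h^{1}}$. Together with Step~2 this is the desired bound $\|v\|_{L^{\infty}}+\|\nabla v\|_{L^{2}}\le C\|f\|_{h^{1}}$.

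\emph{Main obstacle and alternative.} The delicate point is the uniform bound on $\|G(x,\cdot)\|_{bmo(B_{1})}$ as $x$ approaches $\partial B_{1}$, where the reflected singularity $y^{*}$ interacts with the boundary. A cleaner alternative I would fall back on is the atomic decomposition: write $f=\sum_{j}\lambda_{j}a_{j}$ with $\sum|\lambda_{j}|\lesssim\|f\|_{h^{1}}$, where each $a_{j}$ is either a cancellation atom ($\mathrm{supp}\,a_{j}\subset B_{r_{j}}(y_{j})\subset B_{1}$, $\|a_{j}\|_{\infty}\le r_{j}^{-2}$, $\int a_{j}=0$) or a ``large'' local atom. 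Solve $\Delta v_{j}=a_{j}$ with zero boundary data; for cancellation atoms, subtract $G(x,y_{j})$ and use $|G(x,y)-G(x,y_{j})|\lesssim r_{j}/|x-y_{j}|$ to obtain a uniform estimate $\|v_{j}\|_{L^{\infty}}+\|\nabla v_{j}\|_{L^{2}}\le C$ independent of the scale $r_{j}$; large atoms are handled directly. Summing via the triangle inequality then yields the theorem.
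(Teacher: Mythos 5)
The paper itself offers no proof of this statement; it is quoted verbatim from Lamm--Lin \cite{lamm2013Estimates} (who in turn attribute it to Semmes \cite{Semmes1994Apoh} and Chang \cite{chang1993Hp}), so there is no in-paper argument to compare against. Evaluating your proposal on its own merits: the overall strategy is plausible and both of your routes (Green's function plus $h^1$--$bmo$ duality, and atomic decomposition) appear in the literature around this result, but there are two concrete problems in your primary route.

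First, in Step 2 the justification of the uniform $bmo$ bound on the regular part of $G(x,\cdot)$ is wrong. You invoke the lower bound $|x-y^*|\ge 1-|x|\cdot|y|^{-1}$ and call it ``bounded below uniformly,'' but this bound degenerates to $0$ as $x\to\partial B_1$; smoothness of the regular part away from $\{y=0\}\cup\partial B_1$ does not by itself give a uniform $bmo$ norm. The correct observation is the symmetry $G(x,y)=G(y,x)$, which gives $|x-y^*|\,|y|=|y-x^*|\,|x|$, so the regular part equals $\tfrac{1}{2\pi}\log|y-x^*|+\tfrac{1}{2\pi}\log|x|$; as a function of $y$ this is a translate of $\tfrac{1}{2\pi}\log|\cdot|$ (centered at $x^*\notin B_1$) plus a constant, hence has $BMO$ norm bounded independently of $x$. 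With that fix the claim is true, but as written your estimate fails near the boundary. Second, appealing to ``$bmo(B_1)$ is the dual of $h^1(B_1)$'' is not automatic here: the space $h^1(B_1)$ in Definition~\ref{hardy} is a \emph{restricted} local Hardy space on a bounded domain (maximal function cut off at $t<1-|x|$), and the correct dual and pairing for that space is a nontrivial point (this is precisely the content of the Chang--Krantz--Stein/Miyachi type theory). You would need to either cite the appropriate duality theorem for this version of $h^1$ on a domain, or bypass duality entirely. For this reason your fallback via the atomic decomposition, with cancellation atoms away from $\partial B_1$ and uncancelled boundary atoms, is the cleaner and safer route, and it is essentially how the cited sources proceed; it would be worth promoting it from ``fallback'' to the main argument. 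Finally, two small points: with your formula for $G$ (Green's function of $-\Delta$, hence $G\ge0$), the solution of $\Delta v=f$ with $f\ge0$ is $v=-\int G f\le0$, so the sign in Step~1 should be flipped; and in Step~3 the identity $\int|\nabla v|^2=-\int vf$ requires knowing $v\in W^{1,2}_0$ a priori, so one should first truncate $f$ (e.g.\ $f_n=\min(f,n)\ge0$), obtain uniform $L^\infty$ and energy bounds for the smooth solutions $v_n$, and pass to the limit.
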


Let us recall the definition of the local Hardy space $h^1(B_1)$ before proving $|\nabla\phi_1|^2\in h^1(B_1)$.

\begin{defn}\cite[Definition A.3]{lamm2013Estimates}\label{hardy}
Choose a Schwartz function $\rho\in C^\infty_0(B_1)$ such that
\begin{equation*}
    \int_{B_1}\rho=1
\end{equation*}
and let $\rho_t(x)=t^{-2}\rho(\frac{x}{t})$. For a measurable function $f$ defined in $B_1$ we say that $f$ lies in the local Hardy space $h^1(B_1)$ if the radial maximal function of $f$
\begin{equation*}
\begin{split}
      f^*(x)&=\sup_{0<t<1-|x|}\left|\int_{B_t(x)}\frac{1}{t^2}\rho\left(\frac{x-y}{t}\right)f(y)\right|\\
      &=\sup_{0<t<1-|x|}|\rho*f|(x)
\end{split}
\end{equation*}
belongs to $L^1(B_1)$ and we define
\begin{equation*}
    \|f\|_{h^1(B_1)}=\|f^*\|_{L^1(B_1)}.
\end{equation*}
\end{defn}

Next, we will show $|\nabla\phi_1|^2\in h^1(B_1)$. By using Theorem \ref{xip}, Theorem \ref{AB} and Proposition \ref{L infty B}, for
any $x\in B_1$, any $r>0$ such that $B_{2r}(x)\subset B_1$ and any $y\in B_r(x)$, we have
\begin{equation*}
\begin{split}
     0&\leq\frac12|\nabla\phi_1|^2(y)\leq(A\nabla\phi_1+B\nabla^\perp\phi_1)\cdot(P^T\nabla\phi_1)(y)\\
     &=(A\nabla\phi_1+B\nabla^\perp\phi_1)(y)\cdot\bigg(P^T(x)+P^T(y)-P^T(x)\bigg)\nabla\phi_1(y).
\end{split}
\end{equation*}
Then by Lemma \ref{osc p} and \eqref{perp eta} we obtain
\begin{equation}\label{1/4 energy density}
\begin{split}
    &(\nabla^\perp\eta\cdot P^T(x)\nabla\phi_1)(y)\\
    &=(A\nabla\phi_1+B\nabla^\perp\phi_1)(y)\cdot(P^T(x)\nabla\phi_1)(y)\\
    &\geq\frac12|\nabla\phi_1|^2(y)-(A\nabla\phi_1+B\nabla^\perp\phi_1)(y)\cdot\bigg(P^T(y)-P^T(x)\bigg)\nabla\phi_1(y)\\
    &\geq\frac14|\nabla\phi_1|^2(y).
    \end{split}
\end{equation}
Now, we choose a function $\rho\in C^\infty_0(B_1)$ with
\begin{equation*}
    \rho\geq0, \ \ {\rm spt}(\rho)\subset B_{\frac12}, \ \ \rho(x)|_{B_{\frac38}}=2, \ \ \|\nabla\phi_1\|_{L^\infty(B_1)}\leq100, \ \ \int_{B_1}\rho dx=1.
\end{equation*}
From \eqref{1/4 energy density} and Definition \ref{hardy}, we get
\begin{equation*}
    \begin{split}
        \||\nabla\phi_1|^2\|_{h^1(B_1)}&=\int_{B_1}\sup_{0<t<1-|x|}\rho_t*|\nabla\phi_1|^2dx\\
        &\leq 4\int_{B_1}\sup_{0<t<1-|x|}\rho_t*(\nabla^\perp\eta\cdot P^T(x)\nabla\phi_1)dx\\
        &=4\int_{B_1}\sup_{0<t<1-|x|}\rho_t*\bigg((P^T(x))_{ij}(\nabla^\perp\eta^i\cdot \nabla\phi_1^j)\bigg)dx\\
        &\leq C\|\nabla^\perp\eta\|_{L^2(B_1)}\|\nabla\phi_1\|_{L^2(B_1)}\\
        &\leq C\int_{B_1}|\nabla\phi_1|^2,
    \end{split}
\end{equation*}
where we have used the fact that for all $i,j=1,\dots,K$,
\begin{equation*}
    \nabla^\perp\eta^i\cdot \nabla\phi_1^j\in h^1(B_1), \ \|\nabla^\perp\eta^i\cdot \nabla\phi_1^j\|_{h^1(B_1)}\leq C\|\nabla^\perp\eta\|_{L^2(B_1)}\|\nabla\phi_1\|_{L^2(B_1)},
\end{equation*}
which was proved in \cite{lamm2013Estimates} by the extension argument. For completeness, we write it down as follows.

We first extend
\begin{equation*}
    \eta^i-\frac{1}{|B_1|}\int_{B_1}\eta^i \ \ \text{and} \ \ \phi_1^j-\frac{1}{|B_1|}\int_{B_1}\phi^j
\end{equation*}
from $B_1$ to $\mathbb{R}^2$ which yields the existence of $\tilde{\eta}^i,\tilde{\phi}_1^j\in W^{1,2}_0(\mathbb{R}^2)$ such that
\begin{equation*}
    \int_{\mathbb{R}^2}|\nabla\tilde{\eta}^i|^2\leq C\int_{B_1}|\nabla\eta^i|^2,\ \ \ \ \
    \int_{\mathbb{R}^2}|\nabla\tilde{\phi}_1^j|^2\leq C\int_{B_1}|\nabla\phi_1^j|^2
\end{equation*}
and
\begin{equation}\label{extension}
    \nabla\tilde{\eta}^i=\nabla\eta^i, \ \ \ \ \
    \nabla\tilde{\phi}_1^j=\nabla\phi_1^j \ \ \text{a.e. in} \ B_1.
\end{equation}
Then by \cite[Theorem II.2]{coifman1993Compensated} and \cite{Semmes1994Apoh}, we get
\begin{equation}\label{H^1}
\begin{split}
    &\|\nabla^\perp\tilde{\eta}^i\cdot \nabla\tilde{\phi}_1^j\|_{\mathcal{H}^1(\mathbb{R}^2)}\\
    &:=\int_{\mathbb{R}^2}\sup_{\rho\in\mathcal{T}}\sup_{t>0}\left|\int_{B_t}\frac{1}{t^2}\rho\left(\frac{x-y}{t}\right)(\nabla^\perp\tilde{\eta}^i\cdot \nabla\tilde{\phi}_1^j)(y)dy\right|dx\\
    &\leq C\|\nabla\tilde{\eta}^i\|_{L^2(\mathbb{R}^2)}\|\nabla\tilde{\phi}_1^i\|_{L^2(\mathbb{R}^2)}
    \leq C\|\nabla\eta\|_{L^2(B_1)}\|\nabla\phi_1\|_{L^2(B_1)},
\end{split}
\end{equation}
where $\mathcal{T}:=\{\rho\in C^\infty_0(\mathbb{R}^2): {\rm spt}(\rho)\subset B_1 \ \text{and} \ \|\nabla\rho\|_{L^\infty}\leq 100\}$. From \eqref{extension} and \eqref{H^1}, we obtain
\begin{equation*}
\begin{split}
        \|\nabla^\perp\eta^i\cdot \nabla\phi_1^j\|_{h^1(B_1)}
        &=\|\nabla^\perp\tilde{\eta}^i\cdot \nabla\tilde{\phi}_1^j\|_{h^1(B_1)}\\
        &\leq\|\nabla^\perp\tilde{\eta}^i\cdot \nabla\tilde{\phi}_1^j\|_{\mathcal{H}^1(\mathbb{R}^2)}\\
        &\leq  C\|\nabla\eta\|_{L^2(B_1)}\|\nabla\phi_1\|_{L^2(B_1)}.
\end{split}
\end{equation*}

\section{Proofs of Theorem \ref{unique DH} and Theorem \ref{main uniqueness}}

\begin{proof}[Proof of Theorem \ref{unique DH}]
When $\phi_i\in W^{1,2\alpha}$, we have $|\nabla\phi_i|^2\in h^1(B_1)$ by the Definition \ref{hardy}. Therefore, the proofs of energy convexity and uniqueness of maps are still valid without using the gauge decomposition (i.e. Sections 4 and 5). In other word, $\psi_i\in W^{1,4/3}$ is enough and $\varepsilon_0$ does not depend on $\psi_i$. Hence, we complete the proof of Theorem \ref{unique DH}.

\end{proof}

\begin{proof}[Proof of Theorem \ref{main uniqueness}]
For a given boundary condition, by Theorem \ref{main thm'}, it suffices to exclude the case in \eqref{latter case}. We claim that there is a constant $p_0>1$ such that \eqref{latter case} can not happen. If not, for $p_k\to\infty$, there is a sequence of coupled Dirac-harmonic maps $(\phi_k,\psi_k)$ satisfy
\begin{equation}\label{go to zero}
    E(\phi_k):=\int_{B_1}|\nabla\phi_k|^2\leq\varepsilon_0, \ \ E(\psi_k):=\int_{B_1}|\psi_k|^4\leq\varepsilon_0^{p_k}.
\end{equation}
Then, by compactness \cite{jost2018geometric}, the weak limit of $(\phi_k,\psi_k)$ is a Dirac-harmonic map with same boundary value, denoting by $(\bar\phi,\bar\psi)$. Since $E(\psi_k)$ converges to zero, $\bar\psi$ has to vanish, which contradicts with ${\bf B}\bar\psi={\bf B}\psi_0\not\equiv0$. Therefore, there is a constant $p_0$ such that the uniqueness holds for a given boundary value. 

Suppose the constant $p_0$ depends on the boundary value ${\bf B}\psi_0$, that is, there is a sequence of ${\bf B}\psi_k$ such that $p_k\to \infty$. Again, by the compactness, $(\phi_k,\psi_k)$ weakly converges to the Dirac-harmonic map $(\phi_0,\psi_0)$. Since $E(\psi_k)$ converges to zero, $\psi_0=0$. Therefore, $\phi_0$ is the unique harmonic map with boundary value $\varphi$.

\end{proof}
\vspace{1em}

\vspace{2em}

\bibliographystyle{amsplain}
\bibliography{reference}

\end{document}